\newtheorem{theorem}{Theorem}[section]
\newtheorem{corollary}[theorem]{Corollary}
\newtheorem{proposition}[theorem]{Proposition}
\newtheorem{lemma}[theorem]{Lemma}
\theoremstyle{definition}
\newtheorem{definition}[theorem]{Definition}
\newtheorem{example}[theorem]{Example}
\theoremstyle{remark}
\newtheorem*{remark}{Remark}
\newcommand{\Z}{\mathbb{Z}}
\newcommand{\D}{\mathbb{D}}
\newcommand{\R}{\mathbb{R}}
\newcommand{\N}{\mathbb{N}}
\renewcommand{\H}{\mathbb{H}}
\renewcommand{\emptyset}{\varnothing}
\title{Curtain Model for CAT(0) Spaces and Isometries}
\author{Yutong Chen \\ \href{mailto:yutong03@mit.edu}{yutong03@mit.edu}}
\date{}
\begin{document}

\maketitle

\begin{abstract}
    This paper studies the dynamics of isometries in the curtain model, which is used to capture the hyperbolicity in a fixed CAT(0) space. We establish several fundamental properties and fully classify the behavior of semisimple isometries of a CAT(0) space in the associated curtain model. In the nonsemisimple case, we restrict the behavior of parabolic actions with positive translation length in the curtain model in most cases of interest, allowing the use of ping-pong-like techniques on the curtain model to provide insights into the study of CAT(0) groups. 
\end{abstract}
\tableofcontents

\section{Introduction}

In this paper, we will work in the setting of CAT(0) spaces and study their isometry groups. CAT(0) spaces are a generalization of simply connected Riemannian manifolds with nonpositive sectional curvatures, where the \say{nonpositive curvature} condition is given by the CAT(0) inequality on saying that triangles are no fatter than in the corresponding triangle in the Euclidean space. In this setting, the natural class of groups we hope to understand are CAT(0) groups. We say a group $G$ is CAT(0) if $G$ acts geometrically on a proper CAT(0) space.

Despite the success of the program of geometric group theory, there remain fundamental algebraic questions concerning groups with interesting geometric properties that have yet to be fully answered. One such question is the behavior of subgroups, which can be more specifically phrased in understanding the algebraic properties of group elements starting from their geometric properties. A well-known property of this kind is the Tits Alternative. We say a group $G$ satisfies the Tits alternative if for every finitely generated subgroup $H$ of $G$,  either $H$ is virtually solvable or it contains a non-abelian free subgroup. It's been shown that a wide range of groups satisfies the Tits alternative, including finitely generated linear groups over a given field \cite{Tits1972FreeSI}, hyperbolic groups \cite{gromov1987hyperbolic}, mapping class groups \cite{mccarthy1985tits}, and for groups acting geometrically on CAT(0) cubical complexes \cite{sageev2004titsalternativecat0cubical}.

 However, Tits alternative is still completely open for general CAT(0) groups. Even an a priori simpler question on whether there could be infinite torsion subgroups in CAT(0) groups has solutions only in special cases and still uses advanced techniques \cite{norin2022torsion}.  We will study the action of the isometry group on the associated curtain model, which is introduced by Petyt-Spriano-Zalloum as an analog of the curve graph of a surface \cite{petyt2023hyperbolic}. Since the curtain model is hyperbolic, one can hope to use it to make progress towards the Tits alternative, since, by the ping-pong lemma, subgroups that act non-elementarily on a hyperbolic space need to have a free subgroup. So, it is natural to wonder about actions without two independent loxodromic elements.

The goal of this paper is to fully describe how the dynamics of an isometry changes after passing to the associated curtain model. To start with, we first recall the classifications of isometries of CAT(0) spaces and of hyperbolic spaces (see section 2.3 for more details). 

\begin{definition}[Translation Length]
Let X be a metric space, and $g\in$ Isom($X$). We define the \emph{translation length} of $g$ as:
\[
\tau(g)=\lim_{n\rightarrow \infty}\frac{d(x,g^nx)}{n}.
\]
Then $\tau(g)$ is well-defined and is independent of the choice of base point $x$. Note that $\tau(g)\leq \inf_{x\in X} d(x,gx)$.
Thus we also define the \emph{minimal set}:
\[
\textup{Min}(g)=\{x\in X: d(x,gx)=\tau(g)\}.
\]
Call the isometry $g$ \emph{semisimple} if $\textup{Min}(g)\neq \emptyset$.
\end{definition}

\begin{definition}
[Classification of Isometries of CAT(0) Spaces]
Let $X$ be a CAT(0) space, and $g\neq id$ be an isometry of $X$. Then precisely one of the following hold:
\begin{enumerate}
    \item Call $g$ \emph{elliptic} if it has a fixed point.
    \item Call $g$ \emph{parabolic} if $\text{Min}(g)=\emptyset$.
    \item Call $g$ \emph{hyperbolic} if $\text{Min}(g) \neq \emptyset$ and $\tau(g)>0$; equivalently, there is a geodesic line $l$ such that $g$ acts on $l$ by translation of $\tau(g)$.
\end{enumerate}
\end{definition}

\begin{definition}
[Classification of Isometries of Hyperbolic Spaces]
    Let $X$ be a hyperbolic space, and $g\neq id$ be an isometry of $X$. Then precisely one of the following hold:
    \begin{enumerate}
        \item Call $g$ \emph{elliptic} if one (equivalently every) orbit of $\langle g\rangle$ is bounded.
        \item Call $g$ \emph{parabolic} if $\tau(g)=0$ and one (equivalent every) orbit of $\langle g\rangle$ is unbounded.
        \item Call $g$ \emph{loxodromic} if $\tau(g)>0$.
    \end{enumerate}
\end{definition}

We say a parabolic isometry of a CAT(0) space is \emph{strict} if it has $0$ translation length. We also give the notion of a \emph{contracting isometry} of a CAT(0) space, which precisely captures when an isometry becomes loxodromic in the curtain model. 

\begin{definition}
Let $X$ be a CAT(0) space, $A\subset X$, and $g\in$ Isom($X$). The subset $A$ is called \emph{$C-\text{contracting}$}, if for any open ball $B$ disjoint from $A$, a closest point projection $\pi_A(B)$ has diameter at most $C$. We call a set contracting if it's $C$-contracting for some $C$. We call $g$ a \emph{contracting isometry} if there exists some $x\in X$ such that:
\begin{enumerate}
    \item $n\mapsto g^nx$ is a quasi-isometric embedding from $\Z$ to $X$.
    \item the orbit $\{g^nx:n\in \Z\}$ is $C$-contracting for some $C$.
\end{enumerate}
\end{definition}

Our main theorem is the following on excluding parabolic isometries in the curtain model:

\begin{theorem}
\label{premain}
Let $X$ be a complete and proper CAT(0) space. Fix a curtain model $X_D$ of $X$. Let $g\in$ Isom($X$). If $g$ is parabolic in $X_D$, then $g$ is strictly parabolic in $X$.
\end{theorem}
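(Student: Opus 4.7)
The plan is to prove the contrapositive: assuming $g$ is not strictly parabolic in $X$, I will show that $g$ is not parabolic on $X_D$. The negation of ``strictly parabolic'' splits into three cases according to the CAT(0) classification: (i) $g$ is elliptic in $X$, (ii) $g$ is hyperbolic in $X$, and (iii) $g$ is parabolic in $X$ with $\tau_X(g)>0$. Cases (i) and (ii) cover all semisimple isometries and should follow quickly from the classification of semisimple isometries established earlier in the paper; case (iii) is the main new content.

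Case (i) is essentially immediate: if $g$ fixes some $x_0\in X$, then $g^n x_0 = x_0$, so the orbit of $x_0$ in $X_D$ is a single point and $g$ is elliptic on $X_D$. For case (ii), $g$ translates along an axis $\ell$. If $\ell$ is contracting, then $g$ is a contracting isometry in the sense of Definition~1.4, hence loxodromic on $X_D$. If $\ell$ is not contracting, the semisimple classification should give that $g$ acts on $X_D$ with bounded orbits, hence elliptically. Either way $g$ is not parabolic on $X_D$.

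The substance is case (iii). Suppose $g$ is parabolic in $X$ with $\tau_X(g) = \tau > 0$. The orbit map $n \mapsto g^n x$ is a quasi-isometric embedding of $\Z$ into $X$, since $n\tau \leq d_X(x, g^n x) \leq n\,d_X(x, gx)$. My goal is to show that this quasi-axis admits the same dichotomy as an honest axis: either the orbit is a contracting set, so $g$ is a contracting isometry and is loxodromic on $X_D$; or the orbit lies coarsely in a region with no long $L$-separated chain of curtains along it, in which case $g$ acts on $X_D$ with bounded orbits and is elliptic there. Concretely, I would pick a minimizing sequence $x_k$ with $d_X(x_k, g x_k)\to \tau$, which by properness of $X$ escapes to infinity because $\textup{Min}(g)=\varnothing$, and use these ``almost-minimizers'' to approximate an asymptotic axis at infinity from which to pull back separating curtains between $x$ and $g^n x$.

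The main obstacle will be case (iii). Unlike the hyperbolic case, there is no geodesic line on which to thread curtains, so one must replace $\ell$ by an asymptotic/approximate axis and carefully count the $L$-separated curtains that $g$ genuinely shifts, ruling out the intermediate sublinear-but-unbounded growth in $X_D$ that would constitute parabolicity. Completeness and properness of $X$ should both enter essentially: properness to compactify sequences of near-minimizers of $d_X(\cdot, g\cdot)$ and to extract limits in the Tits/visual boundary, and completeness in order to apply the earlier curtain-model lemmas relating contracting geometry in $X$ to loxodromicity in $X_D$.
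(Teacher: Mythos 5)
Your reduction to the contrapositive and the treatment of cases (i) and (ii) match the paper: elliptic stays elliptic, and a hyperbolic isometry is either contracting (hence loxodromic in $X_D$) or elliptic in $X_D$ by the semisimple classification. The first half of case (iii) is also fine: for a parabolic $g$ with $\tau(g)>0$ the orbit map is a quasi-isometric embedding, so if the orbit is contracting then $g$ is a contracting isometry and is loxodromic on $X_D$.

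The genuine gap is the second horn of your dichotomy in case (iii). You assert that if the orbit is not contracting then it ``lies coarsely in a region with no long $L$-separated chain of curtains along it,'' so that $g$ is elliptic on $X_D$ --- but this is precisely the statement to be proved, not a consequence of anything you have set up. The negation of ``contracting'' does not rule out sublinear-but-unbounded growth of $d_L(p,g^np)$, and your proposed mechanism (a minimizing sequence for $d(\cdot,g\cdot)$ escaping to infinity, used to ``approximate an asymptotic axis'') does not by itself produce the structure needed to kill that intermediate behaviour. The paper's proof runs through two external inputs you are missing. First, the Karlsson--Margulis theorem supplies an honest geodesic ray $\alpha$ (the sublinear axis) with $d(g^np,\alpha(\tau n))=o(n)$ and endpoint $\xi\in\partial X$ fixed by $g$; this is a nontrivial result and is where completeness enters, and it is what replaces the missing axis. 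The dichotomy is then taken on whether $\alpha$ is bounded in $X_D$, not on whether the orbit is contracting. If $\alpha$ is bounded in $X_L$, a curtain-counting argument (using the lack-of-backtracking lemma to bound the number of $L$-separated curtains that can cross $\alpha$ or $g^n\alpha$ more than once, then building an infinite chain dual to $\alpha$ to contradict $L$-separation) bounds $d_L(p,g^np)$ linearly in the diameter of $\alpha$, and a summation over $L$ passes this to $D$. If $\alpha$ is unbounded in $X_D$, then $\xi$ defines a point of $\partial X_D$, hence by the Petyt--Spriano--Zalloum boundary theorem (this is where properness enters) $\xi$ is a visibility point of $\partial X$; combining with the fact that $g$ fixes a second boundary point, Wu's argument produces a $g$-invariant convex product $U\times\R$ with $U$ unbounded, whose curtain model is bounded, so $g$ is again elliptic. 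In particular the loxodromic alternative never actually occurs for complete proper $X$. Without the sublinear axis and the boundary identification, your outline names the obstacle but does not overcome it.
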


In fact, we will prove a more general version of Theorem \ref{premain}:

\begin{theorem}
\label{main}
Let $X$ be a CAT(0) space. Fix a curtain model $X_D$ of $X$. Let $g\in$ Isom($X$). Then the following hold:
\begin{enumerate}
    \item $g$ acts loxodromically on $X_D$ if and only if $g$ is contracting in $X$.
    \item If $g$ is semisimple and not contracting in $X$, then $g$ is elliptic in $X_D$.
    \item Suppose further $X$ is complete and proper. If $g$ is a parabolic isometry with $\tau(g)>0$, then $g$ is elliptic in $X_D$. 
\end{enumerate}
\end{theorem}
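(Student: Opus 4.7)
The plan is to prove the three parts in sequence, using the loxodromic-contracting dictionary of part (1) to drive case eliminations in parts (2) and (3). Throughout, I apply the hyperbolic-space trichotomy (elliptic, parabolic, loxodromic) to $X_D$ and argue by ruling out unwanted cases.

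First, for part (1), I would follow the curtain-distance machinery of Petyt--Spriano--Zalloum. To show loxodromic-in-$X_D$ implies contracting-in-$X$, I unpack a loxodromic orbit $(g^n x)$ as a quasi-geodesic in $d_D$ and, from the very definition of curtain distance, extract long chains of pairwise-disjoint curtains between orbit points; standard curtain estimates then force the orbit to be $C$-contracting in $X$. The converse constructs explicit disjoint curtains between orbit points of a contracting $g$-orbit to witness linear growth in $d_D$.

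Next, for part (2), suppose $g$ is semisimple and not contracting. If $g$ is elliptic in $X$, it fixes a point of $X$ (hence a point of $X_D$) and is elliptic in $X_D$. Otherwise $g$ is hyperbolic with an axis $\ell$ and $\tau(g) > 0$; part (1) rules out loxodromic in $X_D$, while the non-contracting hypothesis on $\ell$ bounds the number of pairwise-disjoint curtains that can separate any two axis points, keeping $d_D(x, g^n x)$ uniformly bounded and hence $g$ elliptic in $X_D$.

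Finally, for part (3), assume $X$ is complete and proper, and $g$ is parabolic in $X$ with $\tau(g) > 0$. I rule out both loxodromic and parabolic dynamics in $X_D$. If $g$ were loxodromic in $X_D$, part (1) would make $g$ contracting in $X$, producing two distinct fixed points in $\partial X$ (the attracting and repelling endpoints of the contracting quasi-axis); but a parabolic isometry of a proper CAT(0) space has a unique boundary fixed point, contradiction. If instead $g$ were parabolic in $X_D$, one obtains points $y_k$ and integers $n_k$ with $d_D(y_k, g^{n_k} y_k)$ unbounded while $\tau_{X_D}(g) = 0$; here I would use properness of $X$ to take subsequential limits of the long chains of separating curtains that certify the $d_D$-distance, producing a $g$-invariant geometric object in $X$ along which a displacement-comparison argument forces $\inf_{x \in X} d(x, gx) = 0$, contradicting $\tau(g) > 0$.

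The principal obstacle is ruling out parabolic-in-$X_D$ in part (3): passing from an asymptotic sublinear-but-unbounded statement in $X_D$ to a displacement statement in $X$ requires extracting limiting curtains via properness and then comparing $d_D$-displacement to $d_X$-displacement along the resulting invariant object. This is where the complete-and-proper hypothesis is essential, and it is the principal technical content underlying Theorem \ref{premain}.
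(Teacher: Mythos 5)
Your outline of part (1) is broadly consistent with the paper, which obtains the semisimple case from Theorem 4.9 of Petyt--Spriano--Zalloum and the $\tau(g)>0$ parabolic case from the contracting-geodesic/frequent-chain correspondence (Theorem \ref{lox}), passing from $X_L$ to $X_D$ by a weighted-sum estimate. But parts (2) and (3) contain genuine gaps. In part (2), your mechanism --- that non-contraction ``bounds the number of pairwise-disjoint curtains that can separate any two axis points'' --- is false: $d_\infty(x,y)=\lceil d(x,y)\rceil$, so the number of pairwise-disjoint separating curtains along the axis grows linearly in $n$ no matter what. Non-contraction only controls $L$-separated chains, and even there it yields $\tau_L(g)=0$ (sublinear growth), not boundedness. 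The missing step is a dichotomy: since CAT(0) geodesics are unparametrized rough geodesics of $X_D$ (Proposition \ref{Droughgeodesic}), the axis is either bounded in $X_D$ (whence elliptic) or unbounded, in which case $D(x,g^{Nn}x)\geq n$ for suitable $N$, making $g$ loxodromic --- which part (1) then excludes. You cannot get directly from ``not contracting'' to ``bounded orbit'' without this.

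Part (3) has two problems. First, your exclusion of the loxodromic case rests on the claim that a parabolic isometry of a proper CAT(0) space has a unique boundary fixed point; this is false, and in fact Proposition \ref{fix2points} (Wu) asserts that any isometry with $\tau(g)>0$ fixes \emph{at least two} points of $\partial X$ --- a fact the paper uses in the opposite direction. (Note also that ``contracting implies two fixed points, hence semisimple'' is essentially Corollary \ref{exclude_contracting}, which the paper deduces \emph{from} part (3), so leaning on it here risks circularity.) Second, and more seriously, your treatment of the parabolic-in-$X_D$ case --- the core of Theorem \ref{premain} --- is only a hope: no construction is given for the ``$g$-invariant geometric object'' obtained as a limit of separating curtains, and it is unclear why any such limit would exist, be $g$-invariant, or force $\inf_x d(x,gx)=0$. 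The paper's route is entirely different: the Karlsson--Margulis theorem (Theorem \ref{sublinear-axis}) produces a sublinear axis $\alpha$ with endpoint $\xi\in\partial X$, and one splits on whether $\alpha$ is bounded in $X_D$. If bounded, a curtain-counting argument (how many curtains of an $L$-chain can touch $\alpha$ and $g^n\alpha$, via Lemma \ref{lack backtracking}) bounds $d_L(p,g^np)$ linearly in the $X_L$-diameter of $\alpha$, and summing over $L$ gives ellipticity in $X_D$. If unbounded, $\xi\in\partial_D X$ is a visibility point by Theorem \ref{boundary}, so $g$ preserves a parallel set $U\times\R$ with $U$ unbounded (else $g$ would be semisimple), and Corollary \ref{product} collapses this product to a bounded subset of $X_D$. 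Completeness and properness enter through Karlsson--Margulis and the boundary identification, not through compactness arguments on curtains.
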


This is essentially the best result of this type one can hope for. Even though a strictly parabolic isometry cannot be loxodromic in the curtain model, we cannot easily distinguish whether it becomes parabolic or elliptic. For example, for $g$ is parabolic in $X=\H^2$, $g$ remains parabolic in $X_D$ by Theorem \ref{hyperbolicity}; on the other hand, Examples \ref{hyperbolic_to_elliptic} and \ref{parabolic_to_elliptic} show that any kind of isometry in $X$ can become elliptic in $X_D$.

Finally, by examining the proof of Theorem \ref{main}, we can easily deduce the following corollary: 

\begin{corollary}
     Let $X$ be a CAT(0) space, and $g$ is a parabolic isometry with $\tau(g)>0$. Then:
    \begin{itemize}
        \item $g$ is contracting if and only if there exists $x\in X$ such that $[x,g^nx]$ are uniformly contracting as $n\rightarrow \infty$ .
        \item If we further assume $X$ is complete and proper, then $g$ is not contracting.
    \end{itemize}
\end{corollary}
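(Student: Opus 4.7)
The plan is to derive both bullets directly from Theorem~\ref{main} together with the definition of contracting isometry. A useful preliminary observation is that whenever $\tau(g)>0$, the orbit map $n\mapsto g^nx$ is automatically a quasi-isometric embedding: subadditivity of $n\mapsto d(x,g^nx)$ gives the upper bound $d(x,g^nx)\le n\,d(x,gx)$, and the identity $\tau(g)=\inf_n d(x,g^nx)/n$ gives the matching linear lower bound $d(x,g^nx)\ge n\tau(g)$. So to verify that $g$ is contracting, one only needs to produce a base point $x$ whose orbit is $C$-contracting.

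For the second bullet, I would argue by contradiction. If $X$ is complete and proper and $g$ is parabolic with $\tau(g)>0$ but happens to be contracting, then Theorem~\ref{main}(1) forces $g$ to act loxodromically on $X_D$, while Theorem~\ref{main}(3) forces it to act elliptically on $X_D$. These two behaviors are mutually exclusive (loxodromic requires positive translation length on $X_D$, elliptic requires bounded orbits on $X_D$), which yields the contradiction.

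For the first bullet, the forward direction should follow from the Morse property enjoyed by contracting sets: the orbit is a $C$-contracting quasi-geodesic, and any geodesic joining two of its points lies at uniformly bounded Hausdorff distance from it, so the contraction constant for the orbit is inherited by the segments $[x,g^nx]$ up to an additive adjustment. For the reverse direction, the cleanest route -- and the one signalled by the phrase ``by examining the proof of Theorem~\ref{main}'' -- is to pass through the curtain model rather than grapple directly with closest-point projections to the sparse orbit. In the proof of Theorem~\ref{main}(1), loxodromicity on $X_D$ is detected by exhibiting many suitably separated curtains crossed along $[x,g^nx]$; uniform $C$-contraction of these segments is precisely the input that supplies such curtains in that argument. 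Hence $g$ acts loxodromically on $X_D$, and then Theorem~\ref{main}(1) returns that $g$ is contracting in $X$.

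I expect the main obstacle to be the reverse direction of the first bullet: making the passage from ``segments uniformly contracting'' to ``orbit contracting'' rigorous without routing through $X_D$ is delicate, because the orbit is discrete and projections onto it can a priori behave worse than projections onto a connected segment. The proposed workaround, converting uniform contraction of segments into loxodromicity in the curtain model via the estimate already established for Theorem~\ref{main}(1), sidesteps this issue, at the cost of needing to identify exactly which curtain-crossing estimate in that proof takes uniform contraction of segments as input.
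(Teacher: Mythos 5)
Your proposal is correct and follows essentially the same route as the paper: the first bullet is exactly the equivalence $(1)\Leftrightarrow(2)$ of Theorem~\ref{duality} (forward direction via the Morse/quasi-convexity of the contracting orbit, reverse direction via Theorem~\ref{lox} giving frequently-met $L$-chains, hence loxodromicity on $X_L$ and $X_D$, and then back to contracting), and the second bullet is the paper's Corollary~\ref{exclude_contracting}, obtained by playing ellipticity in $X_D$ (Theorem~\ref{main}(3)) against the contracting-implies-loxodromic direction. Your preliminary remark that $\tau(g)>0$ automatically makes $n\mapsto g^nx$ a quasi-isometric embedding is a nice explicit touch, but the substance of the argument matches the paper's.
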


The by-product gives us insights into checking when a parabolic isometry with positive translation length is contracting, which is a property not related to the curtain model.

\subsection*{Acknowledgement}
I owe many thanks to my summer project supervisor Davide Spriano for his invaluable guidance and mentorship. Davide not only provided the initial questions that inspired this paper but also helped me obtain the necessary background to solve these questions. Our weekly discussions helped me resolve many misunderstandings and offered me new ideas to work with. I would also like to thank Harry Petyt, whose generous assistance and supply of related examples greatly facilitated my learning process. Furthermore, I’m grateful to the University of Oxford for providing partial financial support for this project.

\section{Background}

\subsection{Construction of the Curtain Model}
We first briefly recall the associated curtain model to any CAT(0) space, which is due to Petyt, Spriano and Zalloum in \cite{petyt2023hyperbolic}.

\begin{definition}[Curtains]
Let $X$ be a CAT(0) space and $\alpha: I \rightarrow X$ be a geodesic. For a number $r$ with $[r-1/2,r+1/2]$ in the interior of $I$, the \emph{curtain} dual to $\alpha$ at $r$ is

\[
h=h_{\alpha,r}=\pi_\alpha^{-1}([r-1/2,r+1/2]),
\]
where $\pi_\alpha$ is the closest point projection to the geodesic $\alpha$, and the line segment $[r-1/2,r+1/2]$ is called the \emph{pole} of the curtain.

\end{definition}

\begin{definition} [Half Spaces, Separation] Let $X$ be a CAT(0) space and $h$ be a curtain dual to a geodesic $\alpha:I\rightarrow X$. Then the \emph{half spaces} determined by $h$ are $h^{-}=\pi_\alpha^{-1}(-\infty,r-1/2)$ and $h^{+}=\pi_\alpha^{-1}(r+1/2,\infty)$. We say $h$ \emph{separates} subsets $A$ and $B$ if $A \subseteq h^-$ and $B \subseteq h^+$ or vice versa.

\end{definition}

\begin{remark}
    Note that $\{h^{-},h,h^{+}\}$ forms a partition of space $X$. It's also not hard to see that $h^{-},h,h^{+}$ are all connected, and $d(h^{-},h^{+})=1$.
\end{remark}

\begin{definition} [Chains]
A set $\{h_i\}$ of curtains is called a \emph{chain} if $h_i$ separates $h_{i-1}$ from $h_{i+1}$ for all $i$. We say $\{h_i\}$ separates 
$A,B \subseteq X$ if each $h_i$ does. 
For $x\neq y$, define the chain distance as $d_{\infty}(x,y)=1+\max\{|c|:\text{$c$ is a chain separating $x$ from $y$}\}$; and set $d(x,x)=0$. One can verify that $d_\infty$ is indeed a metric and $d_{\infty}(x,y)=\lceil d(x,y) \rceil$.
\end{definition}

\begin{definition}[$L$-separation, $L$-chain]
Disjoint curtains $h$ and $h'$ are called \emph{$L$-separated} if every chain meeting both $h$ and $h'$ has cardinality at most $L$. Call a chain $c$ of curtains $L$-separated if each pair of curtains is $L$-separated. We refer to $c$ as an \emph{$L$-chain}.
\end{definition}

\begin{definition}[$L$-metric]
Fix $L\in \N$. Given distinct points $x,y \in X$, set $d_L(x,x)=0$ and define:
\[
d_{L}(x,y)=1+\max\{|c|:\text{$c$ is a $L$-chain separating $x$ from $y$}\}.
\]
One can verify that $d_L$ is indeed a metric. We denote the metric space $(X,d_L)$ as $X_L$.
\end{definition}

The most important property of the metric space $X_L$ is that it is hyperbolic in the sense of Gromov's four-point condition and it's weakly roughly geodesic. It's also easy to see that Isom($X$)$\leq$Isom($X_L$) for any $L$. In fact, we will later show that equality holds in many reasonable cases.

Finally, we recall the curtain model: a new metric $D$ that combines the information of all of $d_L$. We fix a sequence $\lambda_L \in (0,1)$ such that 

\[
\sum_{L=1}^{\infty}\lambda_L=1, \quad
\sum_{L=1}^{\infty}L^2\lambda_L=\Lambda<\infty.
\]

For $x,y \in X$, define $D(x,y)=\sum_{L=1}^{\infty}\lambda_Ld_L(x,y)$. The function $D$ is clearly a metric, whose triangle inequality inherits directly from the ones of each $d_L$. Call the metric space $X_D=(X,D)$ \emph{the curtain model} of the CAT(0) space $X$. From the definition, one can reasonably conjecture that $X_D$ inherits lots of nice properties from $X_L$, and we'll see later that it's indeed the case.

\subsection{Properties of the Curtain Model}

In this section, we first state a few important properties of the curtain model.

\begin{lemma}[Gluing $L$-chains, Lemma 2.13 of \cite{petyt2023hyperbolic}]
Suppose that $c=\{\cdots,h_0\}$ and $c'=\{h'_1,\cdots\}$ are $L$-chains with $|c|>1$ and $|c'|>L+1$,oriented in the natural way. 
If $h_0^{+} \cap h'_j \neq \emptyset$ for all $j$ and ${h_1'}^-
\cap h_i \neq \emptyset$ for all $i$, then $c''=\{\cdots,h_{-1},h'_{L+2},\cdots\}$ is an $L$-chain. Its cardinality is $|c|+|c'|-L-2$ when $c$ and $c'$ are finite.
\end{lemma}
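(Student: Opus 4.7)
My plan is to check separately that $c''$ is a chain and that it is $L$-separated. I would fix the natural orientation on both $c$ and $c'$ so that going from lower to higher index always crosses into the positive half-space, and use without further comment the standard nesting property of chains (if $h \subset k^+$ then $h^+ \subset k^+$) together with the connectedness of curtains and half-spaces established in \cite{petyt2023hyperbolic}.

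For the chain property, the only genuinely new adjacency in $c''$ is the pair $(h_{-1}, h'_{L+2})$, so I need to verify $h'_{L+2} \subset h_{-1}^+$ together with the symmetric statement $h_{-1} \subset (h'_{L+2})^-$. Using $h_0 \subset h_{-1}^+$ and nesting, one gets $h_0^+ \subset h_{-1}^+$, so the hypothesis $h_0^+ \cap h'_{L+2} \neq \emptyset$ already places a point of $h'_{L+2}$ in $h_{-1}^+$. To upgrade this to full containment I would argue by contradiction: if $h'_{L+2}$ also met $h_{-1} \cup h_{-1}^-$, connectedness of $h'_{L+2}$ would force it to cross $h_{-1}$ itself, and combining this with the hypothesis ${h'_1}^- \cap h_i \neq \emptyset$ for all $i$ (which constrains how $h'_1$ sits relative to the $h_i$), one can assemble a chain of cardinality exceeding $L$ that meets both $h'_1$ and $h'_{L+2}$, contradicting their $L$-separation inside $c'$. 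The symmetric containment is handled analogously, using $|c| > 1$ in place of $|c'| > L+1$ so that $h_{-1}$ actually exists and is distinct from $h_0$.

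For the $L$-separation of $c''$, pairs of curtains that both come from the old $c$-tail or both from the old $c'$-tail are already $L$-separated, so the only new pairs to consider are $(h_i, h'_j)$ with $i \leq -1$ and $j \geq L+2$. If some chain $c^*$ of size $>L$ met both $h_i$ and $h'_j$, I would splice $c^*$ (typically by prepending $h_0$ or another curtain whose position relative to $h'_1$ and $h'_{L+2}$ is controlled by the two hypotheses) to produce a chain of size $\geq |c^*|$ meeting both $h'_1$ and $h'_{L+2}$, again contradicting the $L$-separation of these curtains in $c'$. The cardinality formula $|c|+|c'|-L-2$ is then pure bookkeeping, since we dropped $h_0$ from $c$ and $h'_1, \ldots, h'_{L+1}$ from $c'$. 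The main obstacle I anticipate is this last splicing step: the hypotheses are purely non-emptiness statements about intersections with half-spaces, and turning them into chain-extension arguments requires careful tracking of connectedness and nesting to ensure the extended families remain genuine chains (each curtain separating its immediate neighbors) rather than merely collections of pairwise meeting curtains.
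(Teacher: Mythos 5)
First, note that the paper you are working from does not prove this statement at all: it is quoted verbatim as background (Lemma 2.13 of \cite{petyt2023hyperbolic}), so the comparison has to be with the argument in that source, which your sketch does not reproduce. Your overall decomposition (chain property at the new adjacency, then $L$-separation of the new pairs, via nesting and connectedness) is the right shape, but at both of the genuinely hard points your proposed contradiction is aimed at the wrong $L$-separated pair, and as written it would fail. For the containment step you say you would ``assemble a chain of cardinality exceeding $L$ that meets both $h'_1$ and $h'_{L+2}$.'' There is no such chain available: under the contradiction hypothesis the only curtains you can force to meet both $h'_1$ and $h'_{L+2}$ are among the $h_i$, and $|c|$ may be as small as $2$. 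The actual mechanism runs the other way. If some $h'_j$ with $j\geq L+1$ met some $h_i$ with $i\leq -1$, then, since $h_i$ meets ${h'_1}^-\subseteq {h'_m}^-$ and meets $h'_j\subseteq {h'_m}^+$ for $m<j$, connectedness forces $h_i$ to cross each of $h'_1,\dots,h'_{L+1}$; each of these curtains meets $h_0^+$ by hypothesis and meets $h_i\subseteq h_0^-$, hence crosses $h_0$. Thus the $(L+1)$-chain $\{h'_1,\dots,h'_{L+1}\}\subseteq c'$ meets both $h_i$ and $h_0$, contradicting the $L$-separation of $c$. This is exactly why the hypotheses are asymmetric ($|c'|>L+1$ supplies the buffer chain, $|c|>1$ supplies the pair $h_i,h_0$), so your plan to handle the ``symmetric containment'' by ``using $|c|>1$ in place of $|c'|>L+1$'' cannot work; both containments $h'_j\subseteq h_i^+$ and $h_i\subseteq {h'_j}^-$ fall out of the same disjointness claim proved by the one asymmetric argument above.

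The second gap is in the $L$-separation of the mixed pairs $(h_i,h'_j)$. Splicing or ``prepending $h_0$'' to a hypothetical long chain $c^*$ does not address what must be contradicted: you need every member of a chain of cardinality greater than $L$ to meet two fixed $L$-separated curtains, and enlarging $c^*$ does not make its members meet $h'_1$ (nothing in the hypotheses forces them to). The argument that works uses the disjointness claim in its stronger form $m\geq L+1$: it yields $h_i\subseteq {h'_{L+1}}^-\cap{h'_{L+2}}^-$ for all $i\leq -1$, while $h'_j\subseteq {h'_{L+1}}^+$ (and $h'_j\subseteq{h'_{L+2}}^+$ or $h'_j=h'_{L+2}$) for $j\geq L+2$. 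Hence any curtain meeting both $h_i$ and $h'_j$ must, by connectedness, cross both $h'_{L+1}$ and $h'_{L+2}$, and the $L$-separation of this pair inside $c'$ bounds any such chain by $L$. Without identifying $h'_{L+1}$ (a discarded curtain) as the separator that every offending curtain must cross, the $L$-separation of the new pairs --- in particular of the critical adjacent pair $(h_{-1},h'_{L+2})$ --- is left unproved, so the proposal as it stands has a genuine gap at precisely the two steps that constitute the content of the lemma.
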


\begin{proposition}[Dualizing Chains, Lemma 2.21 of \cite{petyt2023hyperbolic}]
\label{dualize}
    Let $L,n\in \N$, and $\{h_1,\cdots,h_{(4L+10)n}\}$ be an $L$-chain. Suppose that $A,B \subset X$ are separated by every $h_i$. For any $x\in A$ and $y\in B$, the sets $A$ and $B$ are separated by an $L$-chain of length at least $n+1$ all of whose elements are dual to $[x,y]$ and separates $h_1$ from $h_{(4L+10)n}$
\end{proposition}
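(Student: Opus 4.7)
The plan is to construct the desired $L$-chain by projecting each $h_i$ onto the geodesic $[x,y]$, defining dual curtains there, and then subsampling to recover the $L$-separation. First I would orient the chain so that $x \in h_i^-$ and $y \in h_i^+$ for every $i$, which is automatic since $A$ and $B$ are on opposite sides of every $h_i$. Because each $h_i$ separates $x$ from $y$, the geodesic $[x,y]$ meets $h_i$; pick $p_i \in [x,y]\cap h_i$ and let $s_i$ be its parameter. Define $k_i$ to be the curtain dual to $[x,y]$ with pole $[s_i-1/2, s_i+1/2]$, so that $p_i \in k_i \cap h_i$. The nested structure of the halfspaces of the chain forces the $s_i$ to be (weakly) monotonically increasing along $[x,y]$.

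Next I would subsample: set $i_r = 1 + r(4L+10)$ for $r = 0,1,\ldots,n$ to get $n+1$ candidate curtains $k_{i_0},\ldots,k_{i_n}$. Three conditions must be verified: (a) the $k_{i_r}$ form an honest chain, meaning their poles on $[x,y]$ are pairwise disjoint, i.e., $s_{i_{r+1}} - s_{i_r} > 1$; (b) each $k_{i_r}$ separates $A$ from $B$; and (c) the subchain is $L$-separated. For (b), since $p_{i_r} \in h_{i_r}$ and $A \subseteq h_{i_r}^-$ is separated from $y$ by $h_{i_r}$, points of $A$ project to parameters less than $s_{i_r}$ along $[x,y]$, placing $A$ in $k_{i_r}^-$, and symmetrically $B$ in $k_{i_r}^+$. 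For (a), if some consecutive pair had $s_{i_{r+1}} - s_{i_r} \le 1$, then the $4L+10$ original curtains $h_{i_r}, \ldots, h_{i_{r+1}}$ would have to cross $[x,y]$ within a short interval; I would leverage this clustering to contradict either the chain property or the $L$-separation of the $h_i$'s.

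The main obstacle is (c). Suppose for contradiction a chain $c$ of $L+1$ curtains meets both $k_{i_r}$ and $k_{i_{r+1}}$. Since $p_{i_r} \in k_{i_r} \cap h_{i_r}$ and similarly for $i_{r+1}$, the goal is to replace the endpoints of $c$ with pieces that use the intermediate original curtains $h_{i_r+1}, \ldots, h_{i_{r+1}-1}$ as a bridge, producing an $L$-chain of length greater than $L$ that meets $h_{i_r}$ and $h_{i_{r+1}}$ directly. This would contradict the $L$-separation of $h_{i_r}$ and $h_{i_{r+1}}$. The Gluing $L$-Chains Lemma is the natural tool for the splice, and the factor $4L+10$ is precisely the budget needed so that the spliced chain retains length above the threshold $L$ after the gluing absorbs its losses.

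Once (a)--(c) are in place, $\{k_{i_0}, \ldots, k_{i_n}\}$ is an $L$-chain of length $n+1$, each element dual to $[x,y]$, separating $A$ from $B$. Its extreme elements $k_{i_0}$ and $k_{i_n}$ have poles strictly between $s_1$ and $s_N$ along $[x,y]$ (after a harmless shift of the indexing by one, if needed), so they separate $h_1$ from $h_N = h_{(4L+10)n}$. The delicate step is the splicing argument in (c), which demands careful accounting of how many curtains of $c$ can be lost when replacing its endpoints with connectors to $h_{i_r}$ and $h_{i_{r+1}}$; this is where I expect essentially all of the technical work to lie.
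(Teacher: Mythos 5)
First, a point of comparison: the paper does not prove this proposition at all --- it is imported verbatim as Lemma 2.21 of \cite{petyt2023hyperbolic} --- so there is no in-paper argument to measure you against; your outline does follow the same general strategy as the source (dualize along $[x,y]$, subsample every $4L+10$ indices, verify chain, separation, and $L$-separation). As a proof, however, your step (b) contains a genuine error. You centre the pole of $k_i$ at an arbitrary point $p_i\in[x,y]\cap h_i$ and then infer from $A\subseteq h_{i_r}^-$ that points of $A$ project onto $[x,y]$ at parameters less than $s_{i_r}$, hence lie in $k_{i_r}^-$. This inference is invalid: membership in $h_{i_r}^-$ is a condition on the closest-point projection to the geodesic \emph{dual to} $h_{i_r}$, and these projections are not compatible with $\pi_{[x,y]}$. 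Already in $\R^2$, take $h$ to be the strip $\{|x_1|\le 1/2\}$ dual to the first coordinate axis and $[x,y]$ the diagonal segment from $(-10,-10)$ to $(10,10)$, which crosses $h$ near the origin; the point $a=(-0.6,100)$ lies in $h^-$ but projects onto $[x,y]$ essentially at $y$, so the curtain dual to $[x,y]$ at the crossing point has $a$ and $y$ on the \emph{same} side. So a naively dualized curtain need not separate $A$ from $B$, and even where the monotonicity does hold, landing at parameter $<s_{i_r}$ only places $A$ in $k_{i_r}^-\cup k_{i_r}$, not in the open half-space $k_{i_r}^-$.

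The missing ingredient is the fact that, since $[x,y]$ runs from $h_i^-$ to $h_i^+$ and $d(h_i^-,h_i^+)=1$, each $h_i$ contains a full subsegment of $[x,y]$ of length at least $1$, and these subsegments are pairwise disjoint and ordered. One must take the pole of $k_i$ to be exactly such a unit subsegment (so the pole sits inside $h_i$), and then prove $h_i^-\subseteq k_i^-$ and $h_i^+\subseteq k_i^+$ by a separate argument; this single-curtain dualization is an auxiliary lemma in \cite{petyt2023hyperbolic} that your sketch silently assumes. The same unit-length fact is what rescues your step (a): the poles of $k_{i_r}$ and $k_{i_{r+1}}$ are then separated along $[x,y]$ by the $4L+9$ intermediate unit subsegments, so disjointness is immediate and no ``clustering'' contradiction is needed --- without this fact there is no a priori reason that $4L+10$ disjoint curtains cannot all cross $[x,y]$ inside a window of length $1$. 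For step (c) your splicing plan via the Gluing Lemma is the right idea and is where the constant $4L+10$ is spent, but you give no argument for why a chain meeting $k_{i_r}$ and $k_{i_{r+1}}$ must interact with enough of the intermediate $h_j$ to be glued into a long chain meeting two $L$-separated originals; as you acknowledge, that is the technical heart. Taken together, the proposal is a correct high-level strategy with the decisive steps either wrong as stated (b) or deferred (a), (c).
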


\begin{lemma}[Lemma 3.1 of \cite{petyt2023hyperbolic}]
\label{lack backtracking}
Let $h$ and $k$ be $L-$separated, and let $\alpha$ be a CAT(0) geodesic. If there exist $t_1<t_2<t_3<t_4$ satisfying either
\begin{align*}
 & \alpha(t_1)\in h, \alpha(t_2)\in k, \alpha(t_3)\in k, \alpha(t_4)\in h
 \\
 \text{or} \quad& \alpha(t_1)\in h, \alpha(t_2)\in k, \alpha(t_3)\in h, \alpha(t_4)\in k,
\end{align*}
then $t_3-t_2 \leq L+1$.
\end{lemma}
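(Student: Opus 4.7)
The plan is to argue by contradiction: assuming $t_3 - t_2 > L + 1$, I will build a chain of $L + 1$ curtains dual to $\alpha$, each meeting both $h$ and $k$, contradicting the $L$-separation of $h$ and $k$. Concretely, pick $L + 1$ parameters $s_1 < \cdots < s_{L+1}$ in $[t_2, t_3]$ with consecutive gaps strictly greater than $1$; this is possible since $t_3 - t_2 > L + 1 > L$ (for instance, $s_i = t_2 + \epsilon + (i-1)(1 + \delta)$ for suitably small $\epsilon, \delta > 0$). Because the poles $[s_i - 1/2, s_i + 1/2]$ are pairwise disjoint closed intervals of $\alpha$, the curtains $h_{\alpha, s_i} = \pi_\alpha^{-1}([s_i - 1/2, s_i + 1/2])$ are pairwise disjoint; moreover each $h_{\alpha, s_{i\pm 1}}$ lies in the open half-space $h_{\alpha, s_i}^\pm$, so $\{h_{\alpha, s_i}\}_{i=1}^{L+1}$ is a chain of cardinality $L + 1$.

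The heart of the proof is showing every $h_{\alpha, s_i}$ meets both $h$ and $k$. Since $h$ and $k$ are connected by the remark after the half-space definition, and $\pi_\alpha$ is $1$-Lipschitz hence continuous, the images $\pi_\alpha(h)$ and $\pi_\alpha(k)$ are connected subsets of $\alpha \cong \R$, i.e.\ intervals. In Case 1, the hypotheses give $\pi_\alpha(\alpha(t_1)) = t_1$ and $\pi_\alpha(\alpha(t_4)) = t_4$ in $\pi_\alpha(h)$, hence $\pi_\alpha(h) \supseteq [t_1, t_4]$; analogously $\pi_\alpha(k) \supseteq [t_2, t_3]$. In Case 2, the containments become $\pi_\alpha(h) \supseteq [t_1, t_3]$ and $\pi_\alpha(k) \supseteq [t_2, t_4]$. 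In either case every $s_i \in [t_2, t_3]$ lies in $\pi_\alpha(h) \cap \pi_\alpha(k)$, so lifting $s_i$ through $\pi_\alpha$ produces points $y \in h$ and $z \in k$ whose projections fall inside the pole, placing $y, z \in h_{\alpha, s_i}$ as required.

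The resulting chain of $L + 1$ curtains meeting both $h$ and $k$ contradicts the $L$-separation hypothesis, completing the proof. The main technical obstacle is the bookkeeping around strict inequalities on pole gaps: since the poles are closed intervals, disjointness of curtains demands strict gaps $> 1$, and the slack this requires is exactly what accounts for the $+1$ in the bound $L + 1$. A secondary nuisance is ensuring the poles $[s_i - 1/2, s_i + 1/2]$ sit in the interior of the domain of $\alpha$, which is readily arranged by working inside an ambient geodesic extending slightly beyond $[t_1, t_4]$.
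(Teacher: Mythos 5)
Your proof is correct and is essentially the argument given for this lemma in \cite{petyt2023hyperbolic} (the present paper only cites it): assume $t_3-t_2>L+1$, build a chain of $L+1$ pairwise disjoint curtains dual to $\alpha$ with centers in $[t_2,t_3]$, and use connectedness of $h$ and $k$ to see that each member of the chain meets both, contradicting $L$-separation. One small caveat: your closing remark about extending $\alpha$ beyond $[t_1,t_4]$ should be dropped, since CAT(0) geodesics need not be extendable; it is also unnecessary, because you can take $s_i=t_2+\tfrac12+(i-1)\frac{t_3-t_2-1}{L}$, so that every pole lies in $[t_2,t_3]\subset(t_1,t_4)$, which is already in the interior of the domain of $\alpha$.
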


\begin{proposition}[Proposition 3.3 of \cite{petyt2023hyperbolic}]
\label{rough geodesic}
    If $\alpha$ is a CAT(0) geodesic and $x,y,z\in \alpha$ have $z\in[x,y]$, then $d_L(x,y)\ge d_L(x,z)+d_L(z,y)-3L-3$. In particular, for each $L$ there is a constant $q_L$, linear in $L$,, such that CAT(0) geodesics are unparametrized $q_L$-rough geodesics of $X_L$.
\end{proposition}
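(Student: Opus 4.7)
The plan is to prove the subadditivity inequality $d_L(x,y) \ge d_L(x,z) + d_L(z,y) - 3L-3$ by concatenation of optimal $L$-chains, and then deduce the rough geodesic statement by a standard coarse-geometric argument. For the inequality, I pick $L$-chains $c_1 = \{h_1,\ldots,h_a\}$ separating $x$ from $z$ and $c_2 = \{h'_1,\ldots,h'_b\}$ separating $z$ from $y$ of maximal lengths $a = d_L(x,z)-1$ and $b = d_L(z,y)-1$, oriented naturally along $\alpha$ so that the half-space nestings are consistent with traveling from $x$ to $y$. The goal is to trim and splice $c_1$ and $c_2$ into a single $L$-chain separating $x$ from $y$ of length at least $a + b - (3L+2)$; adding $1$ then yields the desired bound.

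The main technical work takes place near the junction $z$, where two obstructions can arise: curtains from the tail of $c_1$ and the head of $c_2$ may (i) fail the chain property (for instance $h_a$ might not separate $h_{a-1}$ from $h'_1$) and (ii) fail to be $L$-separated. Both failures are localized to an $O(L)$-sized window around $z$ by Lemma \ref{lack backtracking}: if $h \in c_1$ and $h' \in c_2$ were not $L$-separated, then a chain of length $>L$ witnessing this, combined with the geodesic $\alpha$ passing through both $h$ and $h'$ (since $h$ separates $x$ from $z$ and $h'$ separates $z$ from $y$), forces a backtracking pattern of the form $h, h', h', h$ or $h, h', h, h'$ along $\alpha$, and the lemma caps the relevant $\alpha$-window by length $L+1$. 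Since $\alpha$ traverses $c_1$ before $z$ and $c_2$ after $z$ monotonically, only curtains lying in a window of at most $L+1$ from each tail can be problematic. Discarding these bad curtains, plus at most $L$ more to restore the chain property, leaves truncated chains whose last and first elements satisfy the hypotheses of the Gluing Lemma (Lemma 2.13); applying it assembles the pieces into an $L$-chain of length $\ge a + b - (3L+2)$ separating $x$ from $y$, which gives the inequality.

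For the rough geodesic conclusion, I combine the subadditivity with the upper bound $d_L(\alpha(t), \alpha(s)) \le d_\infty(\alpha(t), \alpha(s)) = \lceil |t-s|\rceil$, which follows from $d_L \le d_\infty$. The subadditivity implies that $d_L(\alpha(0), \alpha(t))$ is approximately monotone increasing in $t$ with slack $O(L)$, so reparametrizing $\alpha$ by its approximate $d_L$-distance from a basepoint produces a map $\tilde\alpha$ satisfying $|s-t| - O(L) \le d_L(\tilde\alpha(s), \tilde\alpha(t)) \le |s-t| + O(L)$, i.e., a $q_L$-rough geodesic with $q_L$ linear in $L$.

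The main obstacle will be the junction step: precisely counting how many curtains to discard so that the remaining chain simultaneously satisfies the chain property and the $L$-separation condition across the splice, while matching the hypotheses of the Gluing Lemma to yield total loss exactly $3L+2$ curtains. Lemma \ref{lack backtracking} supplies the essential localization estimate; the rest is careful bookkeeping of half-space nestings and counting.
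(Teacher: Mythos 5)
First, a caveat: the paper does not prove this statement itself --- it is imported verbatim as Proposition 3.3 of \cite{petyt2023hyperbolic} --- so the comparison is against that source. Your overall architecture (splice maximal $L$-chains for $[x,z]$ and $[z,y]$, localize the damage to an $O(L)$-window around $z$ via Lemma \ref{lack backtracking}, and assemble with the Gluing Lemma at a further cost of $L+2$ curtains, for a total loss of $3L+2$) is the right one and matches the toolkit the source uses, and your deduction of the rough-geodesic statement from subadditivity together with $d_L\le d_\infty=\lceil d\rceil$ is fine.

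The gap is in the one concrete mechanism you offer for the junction step, which is logically reversed. Lemma \ref{lack backtracking} takes $L$-separation of $h$ and $k$ as a \emph{hypothesis} and concludes that any backtracking pattern of $\alpha$ across them is confined to a window of length $L+1$; it says nothing about a pair $h\in c_1$, $h'\in c_2$ that \emph{fails} to be $L$-separated, which is how you invoke it. The correct use is internal to each chain: if $h_i,h_{i'}\in c_1$ (which \emph{are} $L$-separated by construction) are both revisited by $\alpha$ after $z$, then $\alpha$ exhibits one of the patterns $h_i,h_{i'},h_{i'},h_i$ or $h_i,h_{i'},h_i,h_{i'}$, the lemma pins the relevant crossing times to a window of length $L+1$ around $z$, and since $h_{i+1}$ separates $h_i$ from $h_{i+2}$ (so their first-crossing times differ by at least $1$) only $O(L)$ elements of $c_1$ can be revisited; these are exactly the curtains that can fail to separate $x$ from $y$ or obstruct the gluing. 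Symmetrically for $c_2$ before $z$. The $L$-separation of cross-pairs then comes for free as part of the \emph{conclusion} of the Gluing Lemma and needs no separate localization. Relatedly, your claim that $\alpha$ traverses $c_1$ before $z$ and $c_2$ after $z$ ``monotonically'' conceals the issue: first-crossing times are monotone by nesting of half-spaces, but half-spaces are not convex and revisits are precisely what must be excluded. Finally, verifying the Gluing Lemma's hypotheses ($h_0^{+}$ meets every $h'_j$ and ${h'_1}^{-}$ meets every $h_i$) is where the real content lies --- knowing $y\in h_0^{+}$ and $y\in {h'_j}^{+}$ does not by itself give $h_0^{+}\cap h'_j\neq\emptyset$ --- and you defer this entirely to ``bookkeeping.'' As written, the proposal is a sound plan whose decisive step is missing and whose only stated justification for that step does not apply.
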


\begin{theorem}[Theorem 9.10 of \cite{petyt2023hyperbolic}]
Fix a sequence $(\lambda_L)$. There is a constant $\delta$ such that the curtain models of all CAT(0) spaces are $\delta$-hyperbolic.
\end{theorem}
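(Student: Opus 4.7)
The plan is to upgrade the hyperbolicity of each individual $d_L$-metric into a uniform hyperbolicity of the weighted sum $D=\sum_L \lambda_L d_L$. The core idea is that triangles made of CAT(0) geodesics should be uniformly thin in every $X_L$ with a \emph{common} witness---the CAT(0) nearest point projection---whose $d_L$-distance to the opposite side grows at most linearly in $L$. The summability hypothesis $\sum_L L^2 \lambda_L < \infty$ will then bundle these bounds into a single constant for $X_D$.

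First I would fix a CAT(0) triangle with sides $[x,y], [y,z], [x,z]$ and a point $p\in[x,y]$, and let $q$ denote the CAT(0) nearest point projection of $p$ onto $[x,z]\cup[y,z]$. I claim $d_L(p,q)\le CL$ with a constant $C$ independent of both $L$ and of the ambient CAT(0) space. If this failed, $p$ and $q$ would be separated by an arbitrarily long $L$-chain, and Proposition \ref{dualize} would let me replace it with a long $L$-chain whose curtains are dual to the segment $[p,q]$. Each such curtain separates $p$ from $q$, and since $p$ lies on $[x,y]$ while $q$ lies on $[x,z]\cup[y,z]$, such a curtain must slice both families of sides. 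Combining the $1$-Lipschitz character of the CAT(0) projection onto $[p,q]$ with Lemma \ref{lack backtracking} (to rule out backtracking of the three CAT(0) sides through $L$-separated curtains) forces the chain length to be $O(L)$, producing the desired contradiction.

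With this thinness bound in hand, summing against the weights gives $D(p,q)\le C\sum_L L\lambda_L\le C\sum_L L^2\lambda_L = C\Lambda$ (using $L\ge 1$), so CAT(0) triangles are $(C\Lambda)$-slim in $X_D$. Summing Proposition \ref{rough geodesic} against $\lambda_L$ shows that CAT(0) geodesics are $Q$-rough geodesics in $X_D$ for some $Q$ depending only on $(\lambda_L)$. Since every pair of points in $X_D$ is joined by such a rough geodesic and every rough-geodesic triangle lies within bounded Hausdorff distance of a CAT(0) triangle, $X_D$ satisfies the Rips thin-triangle condition on its rough geodesics with a constant depending only on $(\lambda_L)$, which is equivalent to Gromov's four-point $\delta$-hyperbolicity.

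The main obstacle is the first step: producing a \emph{single} witness $q$ that simultaneously achieves a $d_L$-bound linear in $L$ for every $L$. If one only knew hyperbolicity of each $X_L$ separately, with potentially different projection points for different $L$, the weighted sum would not respect the minimum appearing in the four-point condition, and no uniform $\delta$ could be extracted. The linear-in-$L$ bound is also what matches (with room to spare) the $L^2$-decay demanded of $(\lambda_L)$; a sub-optimal polynomial bound would force tighter weight constraints. Proposition \ref{dualize} is precisely the tool that allows the canonical CAT(0) projection to serve as a common witness across every scale $L$.
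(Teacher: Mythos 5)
First, note that the paper does not prove this statement at all: it is quoted verbatim as background from \cite{petyt2023hyperbolic}, so your proposal can only be measured against the argument in that reference. Your architecture does match theirs: show that triangles of CAT(0) geodesics are uniformly slim in every $X_L$ with a \emph{single} witness point, sum against $(\lambda_L)$ using $\sum_L L^2\lambda_L<\infty$, combine with the rough-geodesicity of CAT(0) geodesics in $X_D$ (Proposition \ref{Droughgeodesic}), and invoke the equivalence between slim triangles and the four-point condition for (weakly) roughly geodesic spaces. You have also correctly diagnosed the main difficulty, namely that the four-point condition does not survive a weighted sum of the $d_L$ without such a common witness.

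However, the central step is asserted rather than proved, and it is precisely where all the geometry lives. The claim that the $L$-chain dual to $[p,q]$ produced by Proposition \ref{dualize} has length $O(L)$ does not follow from a one-sentence appeal to the Lipschitz property of projections together with Lemma \ref{lack backtracking}. That lemma requires a single geodesic visiting two $L$-separated curtains in an alternating pattern of length four, whereas a curtain separating $p$ from $q$ is a priori only met once by each of the two boundary paths $[p,x]\cup[x,q]$ and $[p,y]\cup[y,z]\cup[z,q]$; you do not explain how the required alternation is manufactured, nor where the defining property of $q$ (being the nearest point of the opposite sides, hence meeting $[q,p]$ at angle at least $\pi/2$) actually enters. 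This bound is essentially the entire content of the hyperbolicity of $X_L$ (Theorem 3.5 of \cite{petyt2023hyperbolic}) and occupies several pages of chain manipulation there. There is also a quantitative slip: even granting a dualized chain of length $O(L)$, unwinding Proposition \ref{dualize} only yields $d_L(p,q)=O(L^2)$, not $O(L)$, since dualization costs a factor of $4L+10$. That is harmless for the conclusion --- it is exactly what the hypothesis $\sum_L L^2\lambda_L<\infty$ is calibrated to absorb, and it explains why the exponent there is $2$ rather than $1$ --- but it shows the accounting in your sketch is not yet correct.
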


\begin{theorem}[Theorem M of \cite{petyt2023hyperbolic}] 
\label{hyperbolicity}
A CAT(0) space $X$ is hyperbolic if and only if there exists $L_0$ such that for each $L\ge L_0$, the identity map is a quasi-isometry between each pair among $X,X_L$ and $X_D$.
\end{theorem}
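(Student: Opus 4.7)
The plan is to prove both implications. The reverse direction is immediate from quasi-isometry invariance of Gromov hyperbolicity: the curtain model $X_D$ is $\delta$-hyperbolic by the stated Theorem 9.10, and the analogous hyperbolicity of $X_L$ is established in the same body of work \cite{petyt2023hyperbolic}. Since the four-point condition passes through quasi-isometries between arbitrary metric spaces, any quasi-isometry $X \to X_L$ or $X \to X_D$ forces $X$ itself to be hyperbolic. The substance therefore lies in the forward direction.

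For the forward direction, suppose $X$ is $\delta$-hyperbolic. The crucial geometric input is that every CAT(0) geodesic in $X$ is uniformly $C$-contracting with $C$ depending only on $\delta$. Given $x,y\in X$, take $\alpha=[x,y]$ and dualize at half-integer positions to produce curtains $h_1,\ldots,h_n$ with $n=\lfloor d(x,y)\rfloor$ separating $x$ from $y$. The goal is to show that this chain is $L$-separated for some $L=L(\delta)$. The idea: if $k$ is a curtain meeting both $h_i$ and $h_j$ with $j-i$ large, then $k$ contains points projecting under $\pi_\alpha$ to widely separated intervals on $\alpha$, so any geodesic witnessing $k$ must closely track $\alpha$ by the contracting property; combined with the no-backtracking content of Lemma \ref{lack backtracking}, this bounds the cardinality of any chain of such $k$'s uniformly in $\delta$. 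Together with the trivial upper bound $d_L(x,y)\leq d_\infty(x,y)=\lceil d(x,y)\rceil$, this yields a quasi-isometry between $X$ and $X_L$ for all $L\geq L_0$.

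To transfer the quasi-isometry to $X_D$, the upper bound $D(x,y)\leq d(x,y)+1$ is immediate from $d_L\leq d_\infty\leq d+1$ and $\sum_L\lambda_L=1$. For the lower bound, splitting the defining sum at $L_0$ gives
\[
D(x,y) \geq \sum_{L\geq L_0}\lambda_L\bigl(d(x,y)-O(1)\bigr),
\]
and since the tail weight $\sum_{L\geq L_0}\lambda_L$ is a positive constant independent of $x,y$, this yields the required linear lower bound modulo an additive constant.

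The main obstacle is the quantitative $L$-separation estimate for curtains dualized along a contracting geodesic. The delicacy is that one must pass from a statement about $\pi_\alpha$ (namely the contraction of $\alpha$) to one about chains of arbitrary curtains whose own witness geodesics are not dual to $\alpha$. The route is to control how such witness geodesics interact with $\pi_\alpha$ and invoke Lemma \ref{lack backtracking} to prevent chains from repeatedly crossing the same pair of $h_i$'s. Once this estimate is in place, the remaining bookkeeping for both $X_L$ and $X_D$ is routine.
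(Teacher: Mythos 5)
This statement is Theorem M of \cite{petyt2023hyperbolic}; the present paper quotes it as background and gives no proof, so your proposal can only be measured against the original argument of Petyt--Spriano--Zalloum. Your overall strategy does track theirs (uniform contraction of geodesics in a hyperbolic CAT(0) space, dualization to a long $L$-chain, then squeezing $d_L$ and $D$ between linear functions of $d$), but two steps are genuine gaps. In the reverse direction you assert that the four-point condition passes through quasi-isometries between arbitrary metric spaces; this is false in general --- Gromov hyperbolicity is a quasi-isometry invariant only in the presence of some form of (rough) geodesicity. The conclusion is salvageable because $X_L$ and $X_D$ are weakly roughly geodesic, so geodesic triangles of $X$ become quasi-geodesic triangles of a roughly geodesic hyperbolic space and the Morse lemma applies; but that is the argument you need to make, not an appeal to a general invariance that does not exist.

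More seriously, the entire content of the forward direction is the claim that the curtains dual to a uniformly contracting geodesic form an $L$-chain for $L=L(\delta)$, and you explicitly defer this as ``the main obstacle'' without resolving it; as written the proposal therefore establishes nothing. Moreover, the tool you gesture at, Lemma \ref{lack backtracking}, takes $L$-separation of the pair as a \emph{hypothesis}, and the chains meeting $h_i$ and $h_j$ that you must bound are not assumed to be $L$-separated either, so invoking that lemma here is circular. The good news is that the hard step is already available verbatim as Theorem \ref{lox}: a $C$-contracting geodesic admits a $(10C+3)$-chain met $8C$-frequently, which gives $d_L(x,y)\ge \tfrac{1}{8C}d(x,y)-O(1)$ for $L\ge 10C+3$, and since $L$-separated pairs are $L'$-separated for all $L'\ge L$ this extends to every $L\ge L_0$. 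With that citation in place your bookkeeping for $X_D$ goes through, except that the displayed lower bound should read $\sum_{L\ge L_0}\lambda_L\bigl(\tfrac{1}{8C}d(x,y)-O(1)\bigr)$: the multiplicative contraction constant cannot be absorbed into the additive error. (A small further correction: curtains dualized at unit spacing along $[x,y]$ are not disjoint, since adjacent poles share an endpoint; one must space them out, costing a bounded factor in $n$.)
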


We now prove the following two propositions for calculating the curtain model of particular types for our future use.

\begin{proposition}
    Let $Z=Y_1\times Y_2$ be a closed convex subset of a CAT(0) space $X$. Suppose $Y_i$ are both unbounded, then $Z\subseteq X_D$ is bounded.
\end{proposition}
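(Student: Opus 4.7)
The plan is to show that $d_L(x,y)$ grows at most linearly in $L$ uniformly over $x,y \in Z$, from which $D(x,y) = \sum_L \lambda_L d_L(x,y)$ is uniformly bounded because $\sum_L L\lambda_L$ is finite (by Cauchy--Schwarz from $\sum_L L^2 \lambda_L = \Lambda < \infty$). By the triangle inequality for $d_L$, it suffices to handle the ``axis-aligned'' case: when $x = (a_1, c)$ and $y = (b_1, c)$ share the same $Y_2$-coordinate (or symmetrically). Indeed, for arbitrary $x = (x_1,x_2)$ and $y = (y_1,y_2)$ in $Z$, the intermediate point $z = (y_1,x_2) \in Z$ gives $d_L(x,y) \le d_L(x,z)+d_L(z,y)$, with each term axis-aligned.

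In the axis-aligned case I would apply the dualizing lemma (Proposition \ref{dualize}) to reduce to bounding $L$-chains whose elements are all dual to $[x,y]$. The key claim is that any two distinct curtains $h_1, h_2$ dual to $[x,y]$ fail to be $L$-separated for every $L$, forcing any $L$-chain of curtains dual to $[x,y]$ to have length at most $1$. To prove this claim I would use the unboundedness of the other factor $Y_2$: choose an arbitrarily long geodesic segment $\gamma \subset Y_2$, let $\beta = \{a_1\} \times \gamma \subset Z$, and consider curtains $h_{\beta, s_j}$ of $X$ for integer-spaced $s_j$ along $\beta$. Since $Z$ is convex in $X$, the closest-point projection in $X$ onto $\beta$ factors through the projection onto $Z$, so restricted to $Z$ each $h_{\beta, s_j}$ has the product-slab form $Y_1 \times B_{s_j}$, while each $h_i$ restricted to $Z$ has the form $A_{r_i} \times Y_2$. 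These slabs always intersect nontrivially, and the same projection identity shows that the half-space structure transfers from $Z$ to $X$, so $\{h_{\beta, s_j}\}$ is a genuine chain in $X$ meeting both $h_1$ and $h_2$. Since $\gamma$ can be arbitrarily long, so can this chain, hence $h_1$ and $h_2$ are not $L$-separated for any $L$.

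Combining the claim with Proposition \ref{dualize}: if an $L$-chain separating $x$ from $y$ had length at least $4L+10$, one could dualize to an $L$-chain of length at least $2$ dual to $[x,y]$, contradicting the claim. Therefore $d_L(x,y) \le 4L+10$ in the axis-aligned case, and hence $d_L(x,y) \le 8L+20$ for all $x,y \in Z$. Summing yields $D(x,y) \le 8\sum_L L \lambda_L + 20 \le 8\sqrt{\Lambda}+20$, so $Z$ has finite diameter in $X_D$.

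The main obstacle will be the convexity-to-product translation in the middle step: carefully verifying that curtains of the ambient $X$ (not merely of the convex subset $Z$) dual to a geodesic inside $Z$ behave, when restricted to $Z$, exactly as predicted by the product structure of $Z$, both for the intersection statements and for the half-space structure needed to confirm that $\{h_{\beta, s_j}\}$ forms a chain in $X$. This all rests on the standard fact that in a CAT(0) space, projection onto a convex subset composes correctly with further projections inside that subset, which is what allows the ambient curtain machinery to inherit the product geometry.
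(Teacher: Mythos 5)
Your proposal is correct and follows essentially the same route as the paper: reduce to points sharing a $Y_2$-coordinate via the triangle inequality, show that no two disjoint curtains dual to such an axis-aligned geodesic can be $L$-separated by building an arbitrarily long transverse chain dual to a long geodesic in the $Y_2$-factor, and then invoke Proposition \ref{dualize} to get a linear-in-$L$ bound that sums against $(\lambda_L)$. Your explicit attention to the fact that ambient closest-point projection onto a geodesic inside the convex set $Z$ factors through projection onto $Z$ (so curtains of $X$ restrict to product slabs on $Z$) is a point the paper's proof uses implicitly, and your constants are a harmless tightening of the paper's.
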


\begin{proof}
Since $Z$ is convex and closed in $X$, $Z$ is CAT(0). Further, each $Y_i$ can be viewed as a convex closed subset of $Z$, so each $Y_i$ is again CAT(0). Let $(y_1,y_2),(\Tilde{y}_1,y_2)$ be two points with the same $Y_2$ coordinate in $Z$. Let $\alpha$ be the geodesic of $Y_1$ connecting $y_1$ and $\Tilde{y}_1$. Pick two disjoint curtains $h_1$ and $h_2$ in dual to the geodesic $\alpha\times\{y_2\}$, say at $(p,y_2)$ and $(q,y_2)$ respectively. We see that these two curtains cannot be $L$-separated. Indeed, we clearly have $[p-1/2,p+1/2]\times Y_2\subseteq h_1$ and $[q-1/2,q+1/2]\times Y_2\subseteq h_2$. Now pick any $s\in Y_1$ and a geodesic $\beta$ in $Y_2$ of length at least $10L$ (note $Y_2$ is unbounded). We see that any chain dual to the geodesic $\{s\} \times \beta$ must contain a subset of form $Y_1\times U$ for some $U\subseteq Y_2$ hence must intersect both $h_1$ and $h_2$. So $h_1$ and $h_2$ are not $L$-separated. Then we apply Proposition \ref{dualize} we see that $d_L((y_1,y_2),(\Tilde{y}_1,y_2))\leq 8L+20$. Now if $(y_1,y_2)$ and $(\Tilde{y}_1,\Tilde{y}_2)$ are two points in $Z$, then:
\[d_L((y_1,y_2),(\tilde{y}_1,\tilde{y}_2))\leq 16L+40,\]
by applying a symmetric argument and using triangle inequality.
Therefore $D((y_1,y_2),(\tilde{y}_1,\tilde{y}_2))\leq \sum_{L=1}^\infty \lambda_L(16L+40)<\infty$. Hence $Z$ is bounded in $X_D$.
\end{proof}

\begin{corollary}
\label{product}
Let $Z=X\times Y$, where $X$ and $Y$ are both unbounded CAT(0) space. Then the curtain model $Z_D$ is bounded.
\end{corollary}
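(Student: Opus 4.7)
The plan is to invoke the preceding proposition with the ambient CAT(0) space taken to be $Z$ itself. By hypothesis $Z=X\times Y$ is a CAT(0) space (being a product of two CAT(0) spaces), and it is trivially a closed convex subset of itself. Setting $Y_1=X$ and $Y_2=Y$ in the proposition, both factors are unbounded by assumption, so the proposition applies verbatim and yields a uniform bound on $D$-distances between any two points of $Z$. Hence $Z_D$ has bounded diameter, which is precisely the conclusion.

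The only thing worth checking is that the proof of the preceding proposition does not implicitly use the existence of ambient space strictly larger than $Z$. Inspecting the argument, the curtains $h_1,h_2$ dual to a geodesic of the form $\alpha\times\{y_2\}$, the auxiliary geodesic $\{s\}\times\beta$ used to produce chains intersecting both $h_1$ and $h_2$, and the application of Proposition \ref{dualize} all take place entirely within $Z$ through its product structure, so the argument carries through unchanged when the ambient space is $Z$ itself. Thus no additional work is needed beyond citing the proposition. I do not anticipate any real obstacle, since this is a direct specialization; the substantive content already lives in the preceding result.
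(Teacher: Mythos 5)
Your proposal is correct and matches the paper's own proof, which likewise observes that a product of CAT(0) spaces is CAT(0) and then applies the preceding proposition with $Z$ as its own ambient space. The extra check that the proposition's argument makes no use of a strictly larger ambient space is a reasonable sanity check but not needed beyond what the paper states.
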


\begin{proof}
    Note that the product of CAT(0) spaces is still CAT(0). So the corollary is simply a special case of the proposition above.
\end{proof}

\begin{proposition}
\label{paste}
Let $\{X_i,i\in I\}$ be a collection of CAT(0) spaces. Let $A$ be a compact convex metric space, equipped with a family of isometries $\phi_i: A \to A_i$ for $A_i \subset X_i$. Let $X=\bigsqcup_A X_i$ be the CAT(0) space obtained by gluing these $X_i$ along $A$. Then the curtain model for $X_D$ is quasi-isometric to $\bigsqcup_A(X_i)_D$.
\end{proposition}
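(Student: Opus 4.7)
The plan is to show that the identity map on the common underlying set $\bigsqcup_A X_i$ is a quasi-isometry between $X_D$ and $Y := \bigsqcup_A(X_i)_D$. Since $A$ is compact, its diameter $d_A$ is finite, and because $D\leq \lceil d\rceil$ in any CAT(0) space, $A$ has $D$-diameter at most $\lceil d_A\rceil$ in both $X_D$ and each $(X_i)_D$. For $x\in X_i$ and $y\in X_j$ with $i\neq j$, the CAT(0) geodesic $[x,y]$ in $X$ passes through some $a\in A$; applying Proposition \ref{rough geodesic} at each level $L$ and summing against $\lambda_L$ (using $\sum L\lambda_L \leq \sum L^2 \lambda_L = \Lambda<\infty$) gives $D_X(x,y) = D_X(x,a) + D_X(a,y) + O(1)$. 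An analogous identity holds in $Y$ by the definition of the gluing metric together with boundedness of $A$. Hence it suffices to compare $D_X$ with $D_{X_i}$ on pairs $x,y$ within a common piece $X_i$.

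For $x,y\in X_i$, the inclusion $X_i\hookrightarrow X$ realizes $X_i$ as a convex CAT(0) subspace. Every curtain $h=h_{\alpha,r}$ with $\alpha\subset X_i$ extends to an $X$-curtain $\tilde h$ dual to $\alpha$ (now viewed in $X$), and conversely every $X$-curtain dual to a geodesic in $X_i$ restricts to an $X_i$-curtain. The key geometric observation is that the closest-point projection in $X$ onto a geodesic $\alpha\subset X_i$ factors through $A$ on the other pieces $X_l$, since $[z,\alpha]$ must pass through $A$ for any $z\notin X_i$; thus all ``other-piece'' contributions to $\tilde h$ are concentrated over $\pi_\alpha(A)\subset\alpha$, which has diameter at most $d_A$. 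Combined with Proposition \ref{dualize} (which replaces long $L$-chains in $X$ by chains dual to $[x,y]\subset X_i$), this yields upper and lower bounds of the form $d_L^X(x,y) = d_L^{X_i}(x,y) \pm O(L+d_A)$. Summing against $\lambda_L$ (using $\sum L\lambda_L<\infty$) produces a quasi-isometric comparison of $D_X$ and $D_{X_i}$ on each $X_i$.

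The main technical obstacle is the bookkeeping for how $L$-separation transfers between $X_i$-curtains and their $X$-extensions. A chain of $X$-curtains meeting two extensions $\tilde h$ and $\tilde h'$ might wander into some other piece $X_l$; however, because the ``other-piece'' data of any curtain dual to a geodesic in $X_i$ is controlled by $\pi_\alpha(A)$ of diameter at most $d_A$, the portion of such a chain lying outside $X_i$ contributes at most $d_A+O(1)$ to its length. This lets one bootstrap $L$-separation in $X_i$ to $(L+O(d_A))$-separation in $X$, and symmetrically via restriction in the opposite direction. Once this is in place, the estimates from the previous paragraph combine with the summability of $\sum L\lambda_L$ to deliver the desired quasi-isometry.
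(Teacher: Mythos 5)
Your overall strategy is the same as the paper's: reduce cross-piece pairs to same-piece pairs by splitting through $A$, compare $d_L^X$ with $d_L^{X_i}$ on each piece up to errors controlled by $\mathrm{diam}(A)$, and sum against $\lambda_L$. The cross-piece reduction via Proposition \ref{rough geodesic} and the boundedness of $A$ is fine. The gap is in the same-piece comparison, at the point where you invoke Proposition \ref{dualize}. That proposition converts an $L$-chain of length $(4L+10)n$ into a dual chain of length only $n+1$, i.e.\ it costs a multiplicative factor $4L+10$ that grows with $L$. This is incompatible with the additive estimate $d_L^X(x,y)=d_L^{X_i}(x,y)\pm O(L+d_A)$ you then assert, and it is fatal to the final summation: a bound of the form $d^{X_i}_{L+O(d_A)}(x,y)\gtrsim d^X_L(x,y)/(4L+10)$ does not yield $\sum_L\lambda_L d_L^{X_i}\gtrsim c\sum_L\lambda_L d_L^X$ for any uniform $c>0$, since the loss $1/(4L+10)$ can be concentrated at large $L$ (take a pair of points whose $d_L$-distance is large only for large $L$). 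So the lower half of the quasi-isometry is not established as written.

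The repair is to avoid dualizing altogether, which is what the paper does: take a chain $c$ realizing $d_L^X(x,y)$ for $x,y\in X_i$. Every curtain of $c$ either meets $A$ or is contained in $X_i\setminus A_i$ (a curtain disjoint from the connected set $X_i$ lies in one open half-space and so cannot separate two of its points), and since the curtains of a chain are pairwise disjoint and nested, at most $N=\mathrm{diam}(A)+O(1)$ of them can meet $A$. Deleting those leaves a chain in $X_i$, and the same counting shows it is an $(L+N)$-chain there; arguing symmetrically gives the two-sided comparison $d^X_{L-N}(x,y)-N\le d^{X_i}_L(x,y)\le d^X_{L+N}(x,y)+N$, with only a shift in the index $L$ and an additive error, which survives the summation. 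Your ``bootstrap $L$-separation'' step needs the same treatment: the chain testing separation of two extended curtains $\tilde h,\tilde h'$ is an arbitrary chain, not one dual to a single geodesic, so its excursions into other pieces are not all confined over $\pi_\alpha(A)$ and need not be short; moreover the extensions of disjoint $X_i$-curtains dual to different geodesics need not remain disjoint in $X$. The deletion argument sidesteps both issues, whereas the curtain-by-curtain extension/restriction correspondence does not.
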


\begin{proof}
We prove the case for $|I|=2$, and the proof for the general case is similar. Let $X=X_1\sqcup_A X_2$. There are 3 classes of curtains in $X$:
\begin{itemize}
    \item[A] curtains in $X_1$ dijoint from $A_1$.
    \item[B] curtains in $X_2$ disjoint from $A_2$.
    \item[C] curtains intersecting $A_1 \simeq A_2$
\end{itemize}

Let $N \ge \text{diam}(A) +3$ be a positive integer. The obvious but important observation is that in each chain, there can be at most $N$ curtains coming from class C. We fix $L \geq N+2$.

Let $x_1,y_1$ be two distinct points in $X_1$, and $c$ be a chain in $X$ realizing $d_L^X(x_1,y_1).$ Remove the curtains in $c$ from class C (if exist) to obtain $c'$. Then $c'$ is a $(L+N)$-chain in $X_1$. Since if not, then there exists a chain $\tilde{c}$ of cardinality $L+N+1$ intersecting all of $c'$. Note that $\tilde{c}$ can only contain curtains from classes A and C, and hence there must be at least $L+1$ of them coming from class A. Contradiction. So: $d_L^X(x_1,y_1)\leq d_{L+N}^{X_1}(x_1,y_1)+N$.

Conversely, let $c$ be a chain in $X_1$ realizing $d_L^{X_1}(x_1,y_1)$. Again, remove the curtains from class C (if exist) to obtain $c'$. Then $c'$ is a $(L+N)$-chain in $X$, by a similar argument in the first direction. So: $d_L^{X_1}(x_1,y_1)\leq d_{L+N}^X(x_1,y_1)+N$.

Combining both, we have the following estimation when $L \geq N+2$: 
\[
d_{L-N}^X(x_1,y_1)-N\leq d_L^{X_1}(x_1,y_1) \leq d_{L+N}^X(x_1,y_1)+N,
\]
where the superscript denotes which space we are computing the curtain distance in.

We can obtain similar result for points $x_2,y_2$ in $X_2$ when $L \geq N+2$
\[
d_{L-N}^X(x_2,y_2)-N\leq d_L^{X_2}(x_2,y_2) \leq d_{L+N}^X(x_2,y_2)+N.
\]

Finally, we deal with the distance of points from different original spaces. Let $x_1\in X_1$ and $x_2 \in X_2$. Choose $a \in A$, and set $a_i = \phi_i(a)$. Then we have:
\[
d_L^X(x_1,x_2)\leq d_L^X(x_1,a_1)+d_L^X(x_2,a_2) \leq d_{L+N}^{X_1}(x_1,p_1)+d_{L+N}^{X_2}(x_2,p_2)+2N.
\]

Conversely, let $c_i$ be a chain in $X_i$ realizing $d_{L-N}^{X_i}(x_i,a_i)$. Let $c_i'$ be obtained from $c_i$ by deleting all curtains intersecting $A_i$. We claim that $c'=c_1'\cup c_2'$ is a $L$-chain in $x$ separating $x_1$ and $x_2$. Indeed, let $h_1$ and $h_2$ be two curtains in $c'$. If they come from the same $c_i'$, then we're done by noting that there cannot be a chain of cardinality $L+1$ intersecting both, since at least $L-N+1$ of them must lie completely in the same original space. On the other hand, say $h_i \subseteq X_i$, then any curtain intersecting both must intersect $A$, which shows that any chain intersecting both has cardinality at most $N<L$. Therefore: $(d_{L-N}^{X_1}(x_1,a_1) -1-N)+(d_{L-N}^{X_2}(x_2,a_2)-1-N)\leq d_L^X(x_1,x_2)-1$, which rearranges to $d_{L-N}^{X_1}(x_1,a_1)+d_{L-N}^{X_2}(x_2,a_2) -2N-1\leq d_L^X(x_1,x_2)$.

Therefore indeed we have:
\[
d_{L-N}^{X_1}(x_1,a_1)+d_{L-N}^{X_2}(x_2,a_2)-2N-1 \leq d_L^X(x_1,x_2)
\leq d_{L+N}^{X_1}(x_1,a_1)+d_{L+N}^{X_2}(x_2,a_2)+2N.
\]

Combining all the estimations given above, we conclude that $X_D$ is quasi-isometric to $(X_1)_D \sqcup_A (X_2)_D$.
\end{proof}

\subsection{Translation Length and the Classification of Isometries}
This section reviews Gromov's classical classification of isometries on CAT(0) spaces and hyperbolic spaces, based on the notion of translation length.

\begin{definition}[Translation Length]
Let X be a metric space, and $g\in$ Isom($X$). We define the \emph{translation length} of $g$ as:
\[
\tau(g)=\lim_{n\rightarrow \infty}\frac{d(x,g^nx)}{n}.
\]
Then $\tau(g)$ is well-defined and is independent of the choice of base point $x$. Note that $\tau(g)\leq \inf_{x\in X} d(x,gx)$.
Thus we also define the \emph{minimal set}:
\[
\textup{Min}(g)=\{x\in X: d(x,gx)=\tau(g)\}.
\]
Call the isometry $g$ \emph{semisimple} if $\textup{Min}(g)\neq \emptyset$.
\end{definition}

\begin{remark}
There is a classical result (Exercise 6.6 of Chapter II in \cite{Bridson1999MetricSO}) justifying the terminology of \say{minimal set}: If $X$ is a metric space and $g$ is a semisimple isometry, then $\tau(g)=\inf_{x\in X} d(x,gx)$.
\end{remark}

\begin{definition}[Classification of Isometries of CAT(0) Spaces]
Let $X$ be a CAT(0) space, and $g\neq id$ be an isometry of $X$. Then precisely one of the following hold:
\begin{enumerate}
    \item Call $g$ \emph{elliptic} if it has a fixed point.
    \item Call $g$ \emph{parabolic} if $\text{Min}(g)=\emptyset$.
    \item Call $g$ \emph{hyperbolic} if $\text{Min}(g) \neq \emptyset$ and $\tau(g)>0$; equivalent, there is a geodesic line $l$ such that $g$ acts on $l$ by translation of $\tau(g)$.
\end{enumerate}
\end{definition}

\begin{remark}
In fact, if $g$ is an isometry of a CAT(0) space, we always have $\tau(g)=\inf_{x\in X} d(x,gx)$. We've seen the equality for $g$ being semisimple, and one can refer to \cite{ballmann2006manifolds} for the proof in the case of $g$ being parabolic. We will use this characterization of translation length for CAT(0) spaces in the future.
\end{remark}

\begin{definition}[Classification of Isometries of Hyperbolic Spaces]
    Let $X$ be a hyperbolic space, and $g\neq id$ be an isometry of $X$. Then precisely one of the following hold:
    \begin{enumerate}
        \item Call $g$ \emph{elliptic} if one (equivalently every) orbit of $\langle g\rangle$ is bounded.
        \item Call $g$ \emph{parabolic} if $\tau(g)=0$ and one (equivalent every) orbit of $\langle g\rangle$ is unbounded.
        \item Call $g$ \emph{loxodromic} if $\tau(g)>0$.
    \end{enumerate}
\end{definition}

\begin{remark}
Buyalo proved that for a hyperbolic Hadamard space, a parabolic isometry in the sense of CAT(0) space has zero translation length \cite{buyalo1998geodesics}. By Proposition 6.7 in chapter II.6 of \cite{Bridson1999MetricSO}, we also know isometries of a complete CAT(0) space with bounded orbits need to fix a point. These show that there is no confusion when talking about parabolic isometries when $X$ is a hyperbolic Hadamard space. In fact, the two classifications coincide in this particular case.
\end{remark}
We now wish to understand the isometries of the hyperbolic space $X_D$. The following theorem tells us Isom($X$)=Isom($X_D$) under mild conditions.

\begin{theorem}[Theorem I of \cite{petyt2023hyperbolic}]
Let $X$ be a proper CAT(0) space with the geodesic extension property. If any one of the following hold, then Isom($X$)=Isom($X_D$).
\begin{enumerate}
    \item $X$ admits a proper cocompact action by a group that is not virtually free.
    \item X is a tree that does not embed in $\R$.
    \item $X$ is one-ended.
\end{enumerate}
\end{theorem}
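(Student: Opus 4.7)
The plan is to prove both inclusions. The inclusion Isom$(X)\subseteq$ Isom$(X_D)$ is immediate: any CAT(0) isometry $g$ carries the geodesic $\alpha$ to the geodesic $g\alpha$ and commutes with closest-point projection, so it sends the curtain $h_{\alpha,r}$ to $h_{g\alpha,r}$. This preserves chains and $L$-separation, hence each $d_L$ and therefore $D$.

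The substantive content lies in the reverse inclusion: an arbitrary $g \in$ Isom$(X_D)$ must in fact already be an isometry of $(X,d)$. My strategy is to recover the CAT(0) geodesic structure, and hence $d$, purely from the metric $D$. The three hypotheses are needed to exclude cases where $X_D$ becomes too degenerate for such reconstruction: by Corollary \ref{product}, if $X$ splits as a product of unbounded CAT(0) factors then $X_D$ is bounded and all bijections of $X$ become $D$-isometries. Conditions (1)--(3), combined with properness and the geodesic extension property, are designed to prevent such collapses and to guarantee that $X_D$ is unbounded, non-trivially $\delta$-hyperbolic, and has a well-defined Gromov boundary that can be related back to the CAT(0) boundary of $X$.

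The core argument should then proceed in three steps. First, by Proposition \ref{rough geodesic}, every CAT(0) geodesic is an unparametrized $q_L$-rough geodesic of $(X,d_L)$ for each $L$, so the image $g(\alpha)$ of any CAT(0) geodesic $\alpha$ is rough-geodesic in every $d_L$; using geodesic extension to make $\alpha$ arbitrarily long together with properness of $X$ to extract limits, one can show that $g(\alpha)$ lies in a uniformly bounded neighbourhood of a genuine CAT(0) geodesic. Second, promote this approximate preservation to exact preservation of geodesic segments: the unique CAT(0) geodesic from $g(x)$ to $g(y)$ should equal the Hausdorff limit of $g(\alpha_n)$ for CAT(0) segments $\alpha_n$ from $x$ to $y$. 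Third, invoke Proposition \ref{dualize} together with the identity $d_\infty(x,y)=\lceil d(x,y)\rceil$ to argue that $D$-distances along a CAT(0) geodesic grow like a determined linear function of $d$, which combined with the first two steps forces $g$ to preserve $d$ on the nose.

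The main obstacle is bridging the blurry, weighted-sum information carried by $D$ and a sharp recovery of the CAT(0) distance $d$, and making the resulting roughness constants bootstrap away uniformly in $x$ and $y$. Each $d_L$ is intrinsically coarse (geodesics are only rough-geodesics), so any recovery of $d$ from $D$ must exploit the whole family $\{d_L\}_L$ at once together with the convergence $d_L \to d_\infty$ as $L\to \infty$. Making this argument uniform over all pairs of points is where the extra hypotheses (1)--(3) play their essential role: they ensure a rich supply of contracting directions whose axes persist under $g$ and along which the $D$-geometry faithfully mirrors the CAT(0) geometry, so that a bijection respecting $D$ has no room to distort $d$.
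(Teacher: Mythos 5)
First, a point of comparison: the paper does not prove this statement at all --- it is quoted verbatim as Theorem I of \cite{petyt2023hyperbolic} and used as a black box, so there is no internal proof to measure your argument against; it has to stand on its own. The easy inclusion Isom$(X)\subseteq$ Isom$(X_D)$ is fine and is exactly the observation the paper makes for each $X_L$. The reverse inclusion, however, is where your plan has genuine gaps, and the central one is structural: your strategy is to reconstruct $d$ from $D$ (``forces $g$ to preserve $d$ on the nose''), underpinned by the claim that hypotheses (1)--(3) guarantee that $X_D$ is unbounded and non-degenerate. That claim is false. Take $X=\H^2\times\H^2$: it is proper, one-ended, has the geodesic extension property, and admits a proper cocompact action by a group that is not virtually free, yet by Corollary \ref{product} its curtain model is \emph{bounded}. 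So the hypotheses do not exclude the collapse you are worried about, the theorem nevertheless applies to such $X$, and no amount of comparing $D$-distances can recover the unbounded metric $d$ there. Any correct proof must therefore run through finer structure (curtains, $L$-separation, boundary data) rather than metric reconstruction of $d$ from $D$. (A side remark in your second paragraph is also wrong: boundedness of $X_D$ does not make every bijection of $X$ a $D$-isometry; a bounded metric can still have a small isometry group.)

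There are further steps that do not hold as stated. An isometry of $D$ need not preserve any individual $d_L$ --- it only preserves the weighted sum --- so you cannot conclude that $g(\alpha)$ is a rough geodesic ``in every $d_L$'' from Proposition \ref{rough geodesic}; that proposition concerns CAT(0) geodesics, and $g$ is not known to act by CAT(0) isometries at that stage (which is precisely what is to be proved). The claim that ``$D$-distances along a CAT(0) geodesic grow like a determined linear function of $d$'' fails: along a geodesic in a flat, $D$ stays bounded while $d$ grows (Example \ref{hyperbolic_to_elliptic}), and even along contracting geodesics the growth rate depends on the contraction constant, not on a universal function. Finally, the appeal to ``the convergence $d_L\to d_\infty$'' is unfounded: in $\R^2$ no two disjoint curtains are $L$-separated for any $L$, so every $d_L$ is uniformly bounded while $d_\infty=\lceil d\rceil$ is not; moreover $d_\infty$ does not enter the definition of $D$ at all. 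So as written the proposal does not give a proof, and its overall approach could not be repaired without being replaced by an argument of a different nature.
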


\subsection{Examples of the Curtain Model and their Isometries}

Even though the isometry groups coincide under certain circumstances, the properties of the same isometry on $X$ and $X_D$ may differ, for example, due to the collapsing of Euclidean flats in $X_D$. In this section, we aim to give a few examples of calculating the curtain model $X_D$ and identifying which class a particular isometry belongs to in both $X$ and $X_D$. But before doing that, we first note the following general facts:

\begin{itemize}
    \item An elliptic isometry in a CAT(0) space $X$ remains elliptic in $X_D$.
    \item Suppose $X$ is both hyperbolic and CAT(0), and $g$ is a parabolic isometry in $X$. Then $g$ remains parabolic in $X_D$, since by Theorem \ref{hyperbolicity} we know $X$ is quasi-isometric to $X_D$.
\end{itemize}

We now turn to the construction of particular spaces and their isometries:

\begin{example}
\label{hyperbolic_to_elliptic}
A hyperbolic isometry in a CAT(0) space $X$ can be elliptic in $X_D$. Let $X=\R^2$, $g(x,y)=(x+1,y)$. Then $g$ is clearly hyperbolic in $X$, but $D(x,y)\leq2$ for any $x,y\in X$. In particular, $g$ is elliptic in $X_D$.
\end{example}

\begin{example}
\label{parabolic_to_elliptic}
A strictly parabolic isometry in a CAT(0) space X can be elliptic in $X_D$. Let $X=\H^2\times \R$ with $\H^2$ with the upper half plane model. Set $g(z,t)=(z+1,t)$. Then $g$ is parabolic with translation length $0$. Now since $\H^2$ and $\R$ are both unbounded, by Corollary \ref{product} we see that $X_D$ is bounded, so $g$ is elliptic in $X_D$.
\end{example}

\begin{example}
A parabolic isometry in a non-hyperbolic CAT(0) space $X$ can be parabolic in $X_D$. Let $\H^2$ be the upper half plane model for the hyperbolic plane. Let $X$ be the space by attaching a copy of $\R^2$ at point $n+i$ on $\H^2$ by identifying $n+i\in \H^2$ and $(0,0) \in \R^2$ for each $n\in \Z$. Let $g$ be the isometry of $X$ induced by the isometry $z\mapsto z+1$ in $\H^2$ (translate each copy of $\R^2$ to the right by 1). Then $g$ is a parabolic isometry of translation length $0$ in $X$. Apply proposition \ref{paste} we see that $X_D$ is quasi-isometric to $\H^2$ and $g$ becomes a parabolic isometry in the sense of hyperbolic space.
\end{example}

\begin{example}
The unsatisfying aspect of the previous example is that the isometry group is tiny. In particular, the action in the space is far from being cocompact. We now consider the following example.
Let $X$ be the space built inductively from the following steps. We start from $\H^2$, and we attach a copy of $\R^2$ at each point of $\H^2$. We call this space $X_1$. Now suppose we've constructed $X_{n-1}$, to obtain $X_n$ we simply attach a copy of $\H^2$ at each point of each copy of $\R^2$ we attached in the last step, and then we attach $\R^2$ at each point on each copy of $\H^2$ we just attached. If we continue forever in this way, we obtain our desirable space $X$.

Clearly $X$ is a complete CAT(0) space, and Isom($\H^2$)$*\R^2$ acts naturally on $X$ by isometry. Note this action is cocompact (in fact transitive). Moreover, the isometry induced by $z\mapsto z+1$ on $\H$ is a strictly parabolic isometry on $X$, and it remains parabolic in $X_D$. However, unlike the previous example, our space $X$ is far from being proper.
\end{example}

\begin{example}
\label{helix}
A hyperbolic isometry in a non-hyperbolic CAT(0) space $X$ can be loxodromic in $X_D$. Let $X=\widetilde{\R^2\setminus \D^2}$, where $\D^2$ is the unit open disc in $\R^2$. Let $g$ be the isometry on $X$ induced by $1\in \Z=\pi_1(\R^2\setminus \D^2)$. Then $g$ is clearly hyperbolic with translation length $2\pi$. Let's now characterize the curtain model of $X_D$. 

It's not hard to see the lift of the boundary of $\D^2$ is a quasi-line: for any two curtains dual to the lift of the same interval on the boundary but on different "sheets", there cannot be a third curtain that crosses both. On the other hand, any other point in $X$ is of distance at most $3$ away from a point on the lift of the boundary of the disc. Indeed, take any point in $x\in X$ and project it down to a point $y$ on the lift of the boundary of  $\D^2$. Consider the line tangent to the lift of the boundary at $y$. This line bounds a half flat in $X$ and this gives an upper bound of the $L$-distance between $x$ and $y$. We see that the distance is of at most $3$ for each $L$ and this shows that $D(x,y)\leq 3$. Therefore $X_D$ is a quasi-line, and $g$ is loxodromic in $X_D$.
\end{example}

\begin{example}
\label{noncompletespace}
A parabolic isometry in an incomplete CAT(0) space can be loxodromic in $X_D$. We slightly modify Example \ref{helix} by considering the $X$ to be the universal cover of the Euclidean plane minus the closed unit disc. Then the same isometry $g$ as in \ref{helix} becomes a contracting parabolic isometry with positive translation length. And indeed, we see that $g$ acts loxodromically on $X_D$. It is worth noting that the parabolicity of $g$ comes from the incompleteness of our space $X$, and if we take the completion of $X$ we recover our original example \ref{helix}. 
\end{example}

\section{Semisimple Isometries}
In this section, we aim to give a full classification of the dynamics of the semisimple isometries of the curtain model. Clearly, if $g$ is elliptic in $X$, then it's also elliptic in $X_D$. It remains to see the dynamics of the hyperbolic(axial) isometries. We use $\tau_D(g),\tau_L(g)$ to denote the translation length of $g$ in the hyperbolic spaces $X_D,X_L$ respectively.

\begin{proposition}
[Proposition 9.5 of \cite{petyt2023hyperbolic}] 
\label{Droughgeodesic}
Let $X$ be a CAT(0) space and $z\in[x,y]$, and $\Lambda$ is the constant associated to the curtain model $X_D$. Then the following inequality holds:
\[
D(x,y)\geq D(x,z)+D(z,y)-6\Lambda.
\]
In particular, there is a constant $q$ such that CAT(0) geodesics are unparametrized $q$-rough geodesics.
\end{proposition}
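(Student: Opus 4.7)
The plan is to apply Proposition \ref{rough geodesic} at each level $L$ and integrate against the weights $\lambda_L$ to obtain the analogous statement for $D$. For $z \in [x,y]$, Proposition \ref{rough geodesic} gives the levelwise inequality
\[
d_L(x,y) \geq d_L(x,z) + d_L(z,y) - (3L + 3).
\]
Multiplying by $\lambda_L$ and summing over $L \geq 1$ yields
\[
D(x,y) \geq D(x,z) + D(z,y) - 3\sum_{L=1}^{\infty} (L+1)\lambda_L,
\]
so the task reduces to bounding the error sum by $2\Lambda$.

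Next, I would bound the sum using only the defining properties of $(\lambda_L)$. Since $L \leq L^2$ for $L \geq 1$ and $\sum_L \lambda_L = 1$, I get $\sum_L (L+1)\lambda_L \leq \Lambda + 1$. Moreover, $\Lambda = \sum_L L^2 \lambda_L \geq \sum_L \lambda_L = 1$, so $\Lambda + 1 \leq 2\Lambda$; combining gives $3\sum_L(L+1)\lambda_L \leq 6\Lambda$, which finishes the main inequality. Note that the slightly wasteful factor of $6$ (rather than $3$) arises precisely from needing to absorb both $\sum L\lambda_L$ and $\sum \lambda_L$ into a single multiple of $\Lambda$.

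For the second clause, I would combine the inequality just proved with the ordinary triangle inequality $D(x,y) \leq D(x,z) + D(z,y)$ to conclude that along any CAT(0) geodesic from $x$ to $y$, consecutive subsegments add up to $D(x,y)$ with additive error at most $6\Lambda$. Reparametrizing the geodesic by $D$-arc length (well defined because $D$ is continuous and monotone along CAT(0) geodesics, which follows from the levelwise monotonicity already implicit in Proposition \ref{rough geodesic}) then exhibits it as an unparametrized $q$-rough geodesic of $X_D$ with $q$ a universal multiple of $\Lambda$.

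The proof is not technically hard; the only real content beyond unpacking definitions is the weighted summation, and the single bookkeeping subtlety is the use of $\Lambda \geq 1$ to collapse two separate error terms into one. I do not anticipate any genuine obstacle.
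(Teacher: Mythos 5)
The paper does not prove this proposition at all --- it is imported verbatim as Proposition 9.5 of \cite{petyt2023hyperbolic} --- so there is no internal proof to compare against; judged on its own, your main computation is correct and is in fact the standard argument (and essentially the one in the cited source): apply the levelwise inequality $d_L(x,y)\ge d_L(x,z)+d_L(z,y)-(3L+3)$ from Proposition \ref{rough geodesic}, multiply by $\lambda_L$, sum, and absorb $3\sum_L(L+1)\lambda_L\le 3(\Lambda+1)\le 6\Lambda$ using $\sum_L\lambda_L=1$, $\sum_L L\lambda_L\le\sum_L L^2\lambda_L=\Lambda$ and $\Lambda\ge 1$. That part is fine.

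The one place your write-up goes wrong is the justification of the ``in particular'' clause: $D$ is \emph{not} continuous along a CAT(0) geodesic (each $d_L$ is integer-valued, so $D(x,\alpha(t))$ is a weighted sum of step functions and can jump), and $t\mapsto d_L(x,\alpha(t))$ need not be monotone, since a geodesic can re-enter a curtain; neither property is ``implicit in Proposition \ref{rough geodesic}''. So ``reparametrize by $D$-arc length'' does not literally make sense. The fix is routine and does not need continuity or monotonicity: the inequality you just proved gives coarse monotonicity of $D(x,\alpha(\cdot))$ along the geodesic (if $z\in[x,y]$ then $D(x,z)\le D(x,y)+6\Lambda$), and the bound $d_L(u,v)\le\lceil d(u,v)\rceil$ gives $D(u,v)\le d(u,v)+1$, i.e.\ coarse continuity of the identity map on the geodesic; together these let you choose a monotone coarse reparametrization exhibiting the geodesic as an unparametrized $q$-rough geodesic with $q$ a fixed multiple of $\Lambda$. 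With that substitution your argument is complete.
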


\begin{proposition}
    Let $g$ be a hyperbolic isometry in CAT(0) space $X$. Then $g$ cannot be parabolic in $X_D$.
\end{proposition}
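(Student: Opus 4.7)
The plan is to exploit Proposition \ref{Droughgeodesic} along the axis of $g$ to squeeze the growth of orbits in $X_D$ between two bounds dictated by $\tau_D(g)$. The upshot will be a dichotomy: either $\tau_D(g)>0$ (so $g$ is loxodromic in $X_D$), or $\langle g\rangle$-orbits are $X_D$-bounded (so $g$ is elliptic); there is no room left for parabolicity.

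First I would fix $x\in\text{Min}(g)$ and the axis $l$ through $x$, on which $g$ acts by translation of length $\tau(g)>0$. The key structural observation is that for any $0\le k\le n$, the point $g^kx$ lies on the CAT(0) geodesic $[x,g^nx]\subset l$, so Proposition \ref{Droughgeodesic} applies with $z=g^kx$. Writing $a_n:=D(x,g^nx)$ and using $g$-equivariance of $D$, this yields the near-superadditivity relation
\[
a_{m+n}\ \ge\ a_m+a_n-6\Lambda\qquad(m,n\ge 0).
\]
By Fekete-type reasoning (or by a direct induction giving $a_n\ge nD(x,gx)-6(n-1)\Lambda$), the shifted sequence $a_n-6\Lambda$ is superadditive, so
\[
\tau_D(g)\ =\ \lim_{n\to\infty}\frac{a_n}{n}\ =\ \sup_{n\ge 1}\frac{a_n-6\Lambda}{n},
\]
which rearranges to the uniform upper bound $a_n\le n\tau_D(g)+6\Lambda$ for every $n\ge 1$.

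Now I would run the dichotomy. If $\tau_D(g)>0$, then by the classification of isometries of a hyperbolic space, $g$ is loxodromic in $X_D$, hence not parabolic. If instead $\tau_D(g)=0$, the bound above collapses to $D(x,g^nx)\le 6\Lambda$ for every $n\in\Z$, so the orbit $\langle g\rangle\cdot x$ is bounded in $X_D$; hence $g$ is elliptic in $X_D$ (using that having a bounded orbit is equivalent to elliptic in the hyperbolic-isometry classification). In either case $g$ fails to be parabolic in $X_D$, as required.

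The main obstacle here is conceptual rather than technical: one must notice that the rough geodesic inequality of Proposition \ref{Droughgeodesic}, a seemingly one-directional estimate, can be used \emph{in both directions} once applied along a genuine axis. The superadditive side immediately gives a lower bound on $\tau_D(g)$, and then identifying $\tau_D(g)$ with the supremum $\sup_n(a_n-6\Lambda)/n$ recycles that same inequality to produce an upper bound on $a_n$, which is what rules out the unbounded-orbit scenario when $\tau_D(g)=0$. Once that observation is made, the rest is a routine use of the classification of hyperbolic-space isometries.
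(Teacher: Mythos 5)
Your proof is correct and rests on the same key ingredient as the paper's: applying Proposition \ref{Droughgeodesic} at orbit points along the axis and iterating, which is exactly the near-superadditivity of $D(x,g^nx)-6\Lambda$. The only difference is packaging: the paper splits into cases according to whether the axis is bounded in $X_D$ (bounded $\Rightarrow$ bounded orbit, hence elliptic; unbounded $\Rightarrow$ pick $N$ with $D(x,g^Nx)\ge 6\Lambda+1$ and chain the inequality to get $\tau_D(g)\ge 1/N$), whereas your Fekete formulation yields the uniform bound $D(x,g^nx)\le n\,\tau_D(g)+6\Lambda$ and hence the same elliptic/loxodromic dichotomy without that case split.
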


\begin{proof}
    Assume $g\in$ Isom($X$) is hyperbolic with axis $l$.
    If $l$ is bounded in $X_D$, then g clearly has a bounded orbit in $X_D$. Now assume $l$ is unbounded in $X_D$, then $\exists N$ such that $D(x,g^Nx)\geq 6\Lambda+1$. Therefore by Proposition \ref{Droughgeodesic} we see:
\begin{align*}
&D(x,g^{Nn}x)\geq D(x,g^Nx)+D(g^Nx,g^{2N}x+\cdots+D(g^{N(n-1)}x,g^{Nn}x)-6(n-1)\Lambda \\
&\geq nD(x,g^Nx)-6n\Lambda =
n(D(x,g^Nx)-6\Lambda) \geq n.
\end{align*}

Therefore:
\[
\tau_D(g)=\lim_{n\rightarrow \infty} \frac{1}{Nn}D(x,g^{Nn}x) \geq \frac{1}{N}.
\]
This shows $g$ is loxodromic in $X_D$.
\end{proof}

We now recall the notion of contracting that will help us identify loxodromic actions in $X_D$.

\begin{definition}[Contracting Set]
Let $X$ be a CAT(0) space. A subset $A\subset X$ is called \emph{$C$-contracting}, if for any open ball $B$ disjoint from $A$, a closest point projection $\pi_A(B)$ has diameter at most $C$. We call a set contracting if it's $C$-contracting for some $C$.
\end{definition}

\begin{definition}[Contracting Isometry]
\label{contracting isometry}
Let $X$ be a CAT(0) space, and $g\in$ Isom($X$). We call $g$ a \emph{contracting isometry} if there exists some $x\in X$ such that:

\begin{enumerate}
    \item $n\mapsto g^nx$ is a quasi-isometric embedding from $\Z$ to $X$.
    \item the orbit $\{g^nx:n\in \Z\}$ is $C$-contracting for some $C$.
\end{enumerate}
\end{definition}

\begin{remark}
It's easy to see that if $A$ and $B$ are of finite Hausdorff distance, then $A$ is contracting if and only if $B$ is.
From the above observation, we conclude that the property of contracting is independent of the choice of the base point. Therefore, a hyperbolic isometry $g$ is contracting if and only if it has a contracting axis.
\end{remark}

The final theorem will conclude the classification of dynamics of the semisimple isometries of the curtain model.

\begin{theorem}
\label{hyperbolic contracting}
Let $g$ be a semisimple isometry of CAT(0) space $X$. The following are equivalent:
\begin{enumerate}
    \item $g$ is contracting.
    \item $g$ acts loxodromically on $X_L$, for some $L$.
    \item $g$ acts loxodromically on $X_D$.
\end{enumerate}
\end{theorem}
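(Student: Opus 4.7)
The plan is to first dispose of the degenerate case. If $g$ is elliptic (the only semisimple option besides hyperbolic), then $g$ fixes a point $x_0$, so every orbit is bounded in $X$; since $d_L(x,y)\leq \lceil d(x,y)\rceil$ and $D(x,y)\leq d(x,y)+1$, the orbit stays bounded in $X_L$ and $X_D$ as well, making $g$ elliptic in both, and of course not contracting (bounded orbits cannot be quasi-isometric embeddings of $\Z$). All three conditions fail simultaneously, so I henceforth assume $g$ is hyperbolic with axis $l$. The implication $(2)\Rightarrow(3)$ is then immediate from $D(x,y)\geq \lambda_L d_L(x,y)$, which yields $\tau_D(g)\geq \lambda_L \tau_L(g)>0$.

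For $(1)\Rightarrow(2)$, the strategy is to exhibit a family of pairwise $L$-separated curtains dual to $l$ at linear density. Given that $l$ is $C$-contracting, I would show there exist $L=L(C)$ and a spacing $N=N(C)$ such that the curtains $h_i$ dual to $l$ at positions $iN$ are pairwise $L$-separated; heuristically, any chain meeting two such widely-spaced dual curtains must travel through a thickening of $l$, and the contracting property bounds the length of such a chain. Once this is set up, the $g$-translates of these curtains assemble into $L$-chains of cardinality at least $n$ separating $x$ from $g^{nN}x$, giving $\tau_L(g)\geq 1/N>0$.

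For $(3)\Rightarrow(1)$, I argue by contrapositive using dominated convergence. Assume $l$ is not contracting; the key claim is that $d_L(x,g^nx)=o(n)$ for every fixed $L$. Indeed, if $d_L(x,g^nx)\geq cn$ along some subsequence, then Proposition~\ref{dualize} produces $L$-separated curtains dual to $[x,g^nx]\subset l$ at linear density, and I would then combine this with Lemma~\ref{lack backtracking} to show that any ball disjoint from $l$ must have bounded projection onto $l$, contradicting the non-contracting hypothesis. Once the sublinear growth is in hand for every $L$, I write
\[
\frac{D(x,g^nx)}{n}=\sum_{L}\lambda_L\,\frac{d_L(x,g^nx)}{n},
\]
in which each summand tends to $0$ and is dominated by $\lambda_L(\tau(g)+2)$; since $\sum_L\lambda_L<\infty$, the dominated convergence theorem yields $\tau_D(g)=0$, contradicting loxodromicity in $X_D$.

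The main obstacle is the two-sided bridge between the metric-geometric notion of a $C$-contracting axis and the combinatorial existence of $L$-separated curtains dual to the axis at linear density; both directions combine the dualizing procedure of Proposition~\ref{dualize} with the backtracking estimate of Lemma~\ref{lack backtracking}, and crucially require a single $L$ to serve uniformly along all of $l$. Once this equivalence is in place, the rest of the argument is essentially bookkeeping with the defining series of the curtain metric $D$.
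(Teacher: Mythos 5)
Your argument is correct in outline, and for the part of the equivalence that the paper actually proves from scratch --- namely $(3)\Rightarrow(2)$ --- your route is the same as the paper's: both control $\frac{1}{n}D(x,g^nx)=\sum_L\lambda_L\frac{d_L(x,g^nx)}{n}$ by noting each summand tends to $0$ when $\tau_L(g)=0$ for all $L$ and is dominated via $d_L\leq\lceil d\rceil$, the paper doing the head/tail split by hand where you invoke dominated convergence. Your treatment of the elliptic case and of $(2)\Rightarrow(3)$ via $D\geq\lambda_Ld_L$ is also fine. The divergence is in $(1)\Leftrightarrow(2)$: the paper simply cites Theorem 4.9 of Petyt--Spriano--Zalloum, whereas you propose to rebuild the equivalence between a $C$-contracting axis and linear-density $L$-separated curtains. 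What you describe as the ``main obstacle'' is precisely the content of Theorem \ref{lox} (Theorem 4.2 of that paper, quoted in Section 4.1): a $D$-contracting geodesic meets a $(10D+3)$-chain $8D$-frequently, and conversely a geodesic meeting an $L$-chain $T$-frequently is $16T(L+4)$-contracting. Invoking that statement turns both of your bridges into short computations (for $(1)\Rightarrow(2)$, frequency of crossings gives $d_L(x_0,x_n)\gtrsim d(x_0,x_n)/8C$ and hence $\tau_L(g)>0$; for the contrapositive of $(3)\Rightarrow(1)$, $\tau_L(g)>0$ plus Proposition \ref{dualize} gives chains dual to segments of the axis met at bounded frequency, hence uniform contraction of the axis). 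As written, however, that bridge is only a heuristic (``must travel through a thickening of $l$''), and proving it from Proposition \ref{dualize} and Lemma \ref{lack backtracking} alone is genuinely nontrivial --- it is the technical heart of the cited theorem --- so you should either cite Theorem \ref{lox} / PSZ Theorem 4.9 or supply that construction in full; everything else in your proposal is complete.
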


\begin{proof}
The equivalence of 1 and 2 is shown in Theorem 4.9 of \cite{petyt2023hyperbolic}. The direction $2 \implies 3$ is by definition. It remains to see if $g$ acts loxodromically on $X_D$, then $g$ acts loxodromically on $X_L$ for some $L$.

Assume not, then $\tau_L(g)=0$ for all $L$. Fix $N>1$, we have:

\[
\frac{1}{n}D(x,g^nx)=\frac{1}{n}\sum_{L=1}^\infty \lambda_L d_L(x,g^nx) \leq \sum_{L=1}^N \lambda_L \frac{d_L(x,g^nx)}{n}+\frac{d_\infty(x,g^nx)}{n}\sum_{L=N+1}^\infty \lambda_L \rightarrow \tau(g)\cdot \sum_{L=N+1}^\infty \lambda_L.
\]
as $n\rightarrow \infty$. But the choice of the $N$ is arbitrary we see indeed $\frac{1}{n}D(x,g^nx) \rightarrow 0$ as $n \rightarrow \infty$, and so $g$ is not loxodromic on $X_D$. Contradiction.
\end{proof}

\begin{remark}
    In fact, the equivalence of 1 and 3 can also deduced by Corollary 9.13 of \cite{petyt2023hyperbolic}.
\end{remark}

\begin{example}
The above theorem provides us with another way to characterize $X_D$ and classify the type of isometry in Example \ref{helix}. For any ball $B$ disjoint from the lift of the boundary $l$, the closest point projection of the ball towards $l$ has diameter at most $2\pi$, that is to say, $l$ is a contracting axis of $g$. So by the above theorem, we see $g$ acts loxodromically on $X_D$.
\end{example}
We summarize the result of this section by the following theorem:

\begin{theorem}
Let $g$ be a semisimple action of CAT(0) space $X$. Then $g$ cannot be parabolic in $X_D$. More precisely, $g$ acts loxodromically on $X_D$ if and only if $g$ is contracting, otherwise $g$ is elliptic in $X_D$.
\end{theorem}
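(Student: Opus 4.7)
The plan is to assemble the theorem from the pieces built up earlier in this section. A semisimple isometry $g$ is either elliptic or hyperbolic in $X$, so I would dispose of each case in turn, using the key observation that the three classes of isometries in the hyperbolic space $X_D$ are mutually exclusive and exhaustive, so ruling out two of them forces the third.

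First I would handle the elliptic case. If $g$ is elliptic in $X$, then it fixes some point $x\in X$, hence $D(x, g^n x)=0$ for all $n$, so the $\langle g \rangle$-orbit of $x$ in $X_D$ is a single point and $g$ is elliptic in $X_D$. Moreover, $g$ cannot be contracting: a contracting isometry by Definition \ref{contracting isometry} requires $n \mapsto g^n x$ to be a quasi-isometric embedding of $\Z$, which is impossible for an isometry with a fixed point.

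Next I would handle the hyperbolic case. By the first proposition of this section, a hyperbolic isometry of $X$ cannot be parabolic in $X_D$, so the only options in $X_D$ are loxodromic or elliptic. Theorem \ref{hyperbolic contracting} then pins down precisely which: $g$ is loxodromic in $X_D$ if and only if $g$ is contracting. Thus, if $g$ is hyperbolic and not contracting, then, since it is neither parabolic nor loxodromic in $X_D$, it must be elliptic. Conversely, a contracting isometry is loxodromic in $X_D$ by Theorem \ref{hyperbolic contracting}, and again is never elliptic in $X_D$ because a loxodromic isometry has unbounded orbits.

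Combining the two cases, a semisimple $g$ is never parabolic in $X_D$, and the dichotomy \emph{loxodromic in} $X_D$ $\Leftrightarrow$ \emph{contracting} holds, with the remaining case being elliptic in $X_D$. I do not anticipate a serious obstacle here: everything is a bookkeeping consequence of Theorem \ref{hyperbolic contracting} together with the earlier proposition excluding parabolicity for hyperbolic isometries. The only subtlety worth flagging explicitly is that an elliptic isometry is not contracting, which keeps the biconditional in the statement honest in the elliptic case.
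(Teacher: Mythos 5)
Your proposal is correct and is essentially the argument the paper intends: the theorem is stated as a summary of the section, obtained by combining the triviality of the elliptic case, the proposition that a hyperbolic isometry of $X$ cannot be parabolic in $X_D$, and Theorem \ref{hyperbolic contracting}. Your remark that an elliptic isometry cannot be contracting is the right point to flag, and it follows from the base-point independence of the contracting property noted in the paper.
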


\section{Parabolic Isometries}

The main goal of this section is to answer the following question: If $g$ is a parabolic isometry of $X$ with $\tau(g)>0$, what is the type of $g$ in Isom($X_D$)?

\subsection{Loxodromic Actions on the Curtain Model}

 We will prove that $g$ is loxodromic in the curtain model if and only if $g$ is contracting in the original space. We say two subsets of a CAT(0) space $X$ are \emph{$C$-Hausdorff equivalent} if each is contained in the $C$-Hausdorff neighborhood of each other. We recall the notion of rank-one isometries introduced by Bestvina and Fujiwara:

\begin{definition}[Definition 5.1 of \cite{bestvina2008characterization}]
    Let $X$ be a CAT(0) space and $g\in$ Isom($X$) be an isometry and let $x_0\in X$ be a base point. Denote $x_n=g^nx_0$. We say $g$ is \emph{rank-one} if $d(x_0,x_n)\rightarrow\infty$ as $n\rightarrow \infty$ and there exists $C>0$ such that for every $n>0$
    \begin{enumerate}
        \item the geodesic $[x_0,x_n]$ is $C$-Hausdorff equivalent to $\{x_0,\cdots,x_n\}$, and
        \item $[x_0,x_n]$ is $C$-contracting.
    \end{enumerate}
\end{definition}
 
It turns out the above notion of rank-one coincides with our notion of contracting in Definition \ref{contracting isometry}. Assume that $g$ is contracting with constant $C$. Fix $x\in X$. Then the orbit $\{g^nx: n\in \Z\}$ is
quasi-convex in $X$ and therefore $n\mapsto x_n=g^nx$ is a quasi-isometric embedding. Hence for each $n>0$, $\bigcup_{i=0}^{n-1}[x_i,x_i+1]$ is a quasi-geodesic with Hausdorff distance not exceeding $D$ from $[x_0,x_n]$, for some constant $D$. This shows that $[x_0,x_n]$ is again uniformly contracting with constant depending on $C$ and $D$. The reverse direction is clear again because the property of being contracting is invariant under a perturbation of a finite Hausdorff distance.

The basic technique we'll use is the following result:

\begin{theorem}[Theorem 4.2 of \cite{petyt2023hyperbolic}]
\label{lox}
Let $X$ be a CAT(0) space. If $\alpha \subset X$ is a $D$-contracting geodesic, then there is a $(10D+3)$-chain of curtains met $8D$-frequently by $\alpha$. Conversely, if a geodesic $\beta$ meets an $L$-chain of curtains $T$-frequently for $T\ge1$, then $\beta$ is $16T(L+4)$-contracting.
\end{theorem}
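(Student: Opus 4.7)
The statement has two directions, and I will treat them separately with opposite strategies.

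For the forward direction (contracting $\Rightarrow$ chain met frequently), my plan is to produce the chain by a direct sampling construction. Choose parameters $s_n = 8Dn$ and let $h_n = h_{\alpha,s_n}$ be the curtain dual to $\alpha$ at $s_n$. Because these curtains have disjoint poles on $\alpha$, the family $\{h_n\}$ is automatically a chain and $\alpha$ meets it $8D$-frequently by construction. The nontrivial content is to verify $L$-separation with $L=10D+3$. I would argue by contradiction: suppose some pair $h_i,h_j$ (with $j>i$) admits a crossing chain $c$ of cardinality at least $L+1$. Choose $x\in h_i$ and $y\in h_j$ lying in the two extreme curtains of $c$, and apply the Dualizing Chains proposition (\ref{dualize}) to replace $c$ by a comparably long chain whose members are all dual to $[x,y]$ and still separate $h_i$ from $h_j$. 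The unit-length poles of these dualized curtains must sit on disjoint sub-intervals of $[x,y]$, so $[x,y]$ has length on the order of $L$. On the other hand $x,y$ lie within $1/2$ of $\alpha$, so $\pi_\alpha([x,y])$ covers essentially the interval $[s_i,s_j]$, which has length at least $8D(j-i)\ge 8D$. Feeding this into the $D$-contracting inequality for $\alpha$ forces $[x,y]$ to pass through a small neighbourhood of $\alpha$, which contradicts the count of dualized curtains that $[x,y]$ must cross. The constants $8D$ and $10D+3$ are what make the numerology close: the factor $8D$ ensures the projection is large enough, while $10D+3$ gives enough slack for the $D$-contracting estimate and the cost of dualizing.

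For the converse (frequent $L$-chain $\Rightarrow$ contracting), the plan is to use lack of backtracking. Let $B$ be an open ball disjoint from $\beta$, and let $x,y\in B$ with projections $p=\pi_\beta(x)$ and $q=\pi_\beta(y)$. Suppose $d(p,q)>16T(L+4)$. Because $\beta$ meets the $L$-chain $T$-frequently, the sub-arc of $\beta$ from $p$ to $q$ meets at least $16(L+4)$ of the chain's curtains. Each such curtain $h$ separates $p$ from $q$, so it must intersect at least one of the four geodesics $[x,p]$, $[p,y]\text{-via-}\beta$, $[y,q]$, or the geodesic $[x,y]$ (which, crucially, stays inside $B$ and therefore misses $\beta$). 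I would split the curtains according to which of these segments they hit, and for each segment apply the lack-of-backtracking Lemma \ref{lack backtracking} to two consecutive $L$-separated curtains; this bounds the number of them meeting any single geodesic by a constant times $L$. Summing the four contributions yields a total bounded by a multiple of $L+4$, which contradicts the count $16(L+4)$ and gives the claimed contracting constant.

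The main obstacle is the forward direction, specifically the quantitative interplay between the geometric $D$-contracting hypothesis and the combinatorial $L$-separation conclusion. The difficulty is that $L$-separation quantifies over \emph{all} chains meeting both curtains, not just those dual to $\alpha$; Proposition \ref{dualize} is exactly the bridge that converts such an arbitrary chain into one dual to a concrete geodesic $[x,y]$ on which the contracting hypothesis can be applied. Getting the constants to close up requires choosing the sampling spacing ($8D$) large compared to $D$ so that the dualized ladder is long enough to drag $[x,y]$ back to $\alpha$ on a length that exceeds what the contracting bound allows, while keeping $L=10D+3$ large enough to absorb the $(4L+10)$-factor in the dualizing step and the $1/2$-fuzziness in the definition of curtains. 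In contrast, the converse direction is largely bookkeeping once Lemma \ref{lack backtracking} is in hand.
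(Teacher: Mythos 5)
A preliminary remark: this statement is imported from Petyt--Spriano--Zalloum (their Theorem 4.2) and the paper gives no proof of it, so your argument can only be judged on its own terms; on those terms, both directions have genuine gaps.

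In the forward direction, the verification of $L$-separation rests on Proposition \ref{dualize}, but that proposition does not apply to your chain $c$. Its hypothesis is that the input is already an $L$-chain of cardinality $(4L+10)n$, whereas the chain you must rule out is an \emph{arbitrary} chain meeting both $h_i$ and $h_j$ --- the definition of $L$-separation quantifies over all chains, with no separation assumed --- so $c$ cannot be fed into \ref{dualize} at all. Even if it could, the output has cardinality only about $|c|/(4L+10)$, not \say{comparably long}: from $|c|=L+1$ you would need $|c|\ge 4L+10$ to extract even one dualized curtain, so the construction produces nothing and the numerology cannot close. A further error is the claim that $x\in h_i$ and $y\in h_j$ \say{lie within $1/2$ of $\alpha$}: membership in a curtain dual to $\alpha$ only constrains the projection $\pi_\alpha(x)$ to a unit interval, and curtains contain points arbitrarily far from $\alpha$. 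What is true (and what a correct proof must exploit) is that $\pi_\alpha(x)$ and $\pi_\alpha(y)$ lie near $s_i$ and $s_j$, so $D$-contraction forces connected sets joining the two poles to pass through a bounded region near $\alpha$; one then has to show that \emph{every} curtain of $c$, using its connectedness and the fact that it meets both $h_i$ and $h_j$, passes through such a region, and bound $|c|$ via the disjointness and unit width of chain elements along a short geodesic. None of this appears in your sketch, and the concluding \say{contradiction with the count of dualized curtains} is never actually derived.

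In the converse direction, the central counting claim --- that Lemma \ref{lack backtracking} bounds by a multiple of $L$ the number of curtains of the $L$-chain meeting any single geodesic --- is false: the geodesic $\beta$ itself meets every curtain of the chain. Lemma \ref{lack backtracking} only yields information when one geodesic meets two $L$-separated curtains in one of the four-point patterns $h,k,k,h$ or $h,k,h,k$, and your sketch never exhibits such a configuration on $[x,p]$, $[y,q]$ or $[x,y]$; each of these segments may cross each relevant curtain exactly once, in which case the lemma says nothing. The real work is to combine $L$-separation with the specific geometry you list but never use: that $[x,p]$ and $[y,q]$ terminate at closest-point projections (hence meet $\beta$ only at their endpoints) and that $[x,y]$ lies in the ball $B$, hence misses $\beta$; producing the forbidden four-point patterns from this data is exactly the step that is missing. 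Two smaller points: the fourth item in your case division (a path from $p$ to $y$ through $\beta$) is not a geodesic and is not needed, since a curtain separating $p$ from $q$ must meet the connected path $[p,x]\cup[x,y]\cup[y,q]$, giving a three-fold division; and you should discard the (at most two) chain curtains containing $p$ or $q$ before asserting that the remaining ones separate $p$ from $q$.
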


\begin{proposition}
Let X be a CAT(0) space, and $g\in$ Isom($X$) is parabolic such that $\tau(g)>0$. Fix any $x\in X$, and let $x_n=g^nx$. If $[x_0,x_n]$ is uniformly contracting, then $g$ is loxodromic in $X_L$ for some $L$.
\end{proposition}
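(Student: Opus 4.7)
The plan is to apply the first direction of Theorem \ref{lox} to each geodesic segment $\alpha_n := [x_0, x_n]$. By the uniform contracting hypothesis there is a constant $D$, independent of $n$, such that every $\alpha_n$ is $D$-contracting. The theorem then produces, for each $n$, a chain of curtains each separating $x_0$ from $x_n$, which is an $L$-chain met $T$-frequently by $\alpha_n$, where $L := 10D+3$ and $T := 8D$. The crucial observation is that both $L$ and $T$ depend only on $D$, hence are independent of $n$.

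Next I would unwrap the notion of being met $T$-frequently: consecutive curtains of the chain are met by $\alpha_n$ at parameters differing by at most $T$, so the chain must contain at least $\lfloor d(x_0, x_n)/T \rfloor$ curtains (up to an additive $O(1)$ coming from the endpoints). Combined with the definition of $d_L$, this yields
\[
d_L(x_0, x_n) \;\geq\; \frac{d(x_0, x_n)}{T} - C
\]
for some constant $C$ depending only on $T$.

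Finally, since $g$ is parabolic with $\tau(g) > 0$, we have $d(x_0, g^n x_0)/n \to \tau(g) > 0$. Dividing the estimate above by $n$ and passing to the limit gives
\[
\tau_L(g) \;=\; \lim_{n \to \infty} \frac{d_L(x_0, g^n x_0)}{n} \;\geq\; \frac{\tau(g)}{T} \;>\; 0,
\]
so $g$ is loxodromic in $X_L$ for this value of $L$.

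The only real obstacle is bookkeeping: one must verify that the chain length estimate genuinely follows from the definition of $T$-frequent meeting, and that the constants $L, T$ can be chosen independently of $n$ from the uniform contracting hypothesis. Both points are immediate once the definitions are unfolded. It is worth noting that the parabolicity of $g$ plays no direct role in the argument — only $\tau(g) > 0$ is used — so the same proof goes through for any isometry with positive translation length whose orbit segments are uniformly contracting.
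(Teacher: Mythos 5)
Your proposal is correct and follows essentially the same route as the paper: apply the first direction of Theorem \ref{lox} to each segment $[x_0,x_n]$ with the uniform contracting constant, note that $L$ and the frequency constant are independent of $n$, deduce $d_L(x_0,x_n)\geq \frac{1}{8K}d(x_0,x_n)-O(1)$, and divide by $n$ using $\tau(g)>0$. Your closing observation that only $\tau(g)>0$ (not parabolicity) is used also matches how the paper later reuses the argument.
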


\begin{proof}
Suppose $[x_0,x_n]$ is $K$-contracting for some constant K. From theorem \ref{lox}, there exists a $L=(10K+3)$ chain meets $8K$-frequently by $[x_0,x_n]$, therefore $d_L(x_0,x_n) \geq \frac{1}{8K}d(x_0,x_n)-10 \geq \frac{n}{8K}\tau(g)-10$. Therefore $lim_{n\rightarrow \infty}\frac{1}{n}d_L(x_0,x_n)\geq \frac{1}{8K}\tau(g)>0$. This shows that $g$ is loxodromic in $X_L$.
\end{proof}

To prove the converse, we adapt the proof of Proposition 4.12 of \cite{petyt2023hyperbolic} on stable subgroups in the following particular setting:

\begin{proposition}
Let X be a CAT(0) space, and $g\in$ Isom($X$) is parabolic such that $\tau(g)>0$. If $g$ is loxodromic in $X_L$ for some $L$, then $g$ is contracting.
\end{proposition}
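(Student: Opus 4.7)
The plan is to leverage the loxodromic action on $X_L$ to produce linearly many curtains dual to each geodesic $[x_0, x_n]$ (where $x_n = g^n x$), and then invoke the converse direction of Theorem \ref{lox} to conclude that these geodesics are uniformly contracting, which together with the quasi-isometric embedding furnished by $\tau(g) > 0$ shows that $g$ is contracting in the sense of Definition \ref{contracting isometry}.

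Concretely, I would proceed in three steps. First, since $\tau(g) > 0$, subadditivity of $n \mapsto d(x_0, g^n x_0)$ gives $n\,\tau(g) \leq d(x_0, x_n) \leq n\, d(x_0, g x_0)$, so $n \mapsto x_n$ is a quasi-isometric embedding $\Z \hookrightarrow X$; this establishes property (1) of Definition \ref{contracting isometry} and places $\{x_n\}$ at bounded Hausdorff distance from any concatenation of segments $[x_k, x_{k+1}]$. Second, applying the same subadditivity argument to the metric $d_L$ and using $\tau_L(g) > 0$, one gets $d_L(x_0, x_n) \geq n\,\tau_L(g)$, which yields an $L$-chain of cardinality at least $n\,\tau_L(g) - 1$ separating $x_0$ from $x_n$. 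Applying Proposition \ref{dualize} with $A = \{x_0\}$ and $B = \{x_n\}$ produces an $L$-chain $c_n$ of length at least $n\,\tau_L(g)/(4L+10) - O(1)$ consisting entirely of curtains dual to $[x_0, x_n]$. Third, I would use this density to show that $[x_0, x_n]$ meets $c_n$ $T$-frequently for a threshold $T$ independent of $n$; the converse direction of Theorem \ref{lox} then gives a uniform contracting constant for $[x_0, x_n]$, and combined with the first step this yields that the orbit $\{x_n\}$ itself is contracting.

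The main obstacle is the third step: passing from a linear count of dual curtains to a uniform upper bound on their spacing along the geodesic. Since $[x_0, x_n]$ has length at most $n \, d(x_0, gx_0)$ and $|c_n| \geq c n$, the average spacing is $O(1)$, but I need a maximum spacing bound to apply Theorem \ref{lox}. For semisimple $g$ one would directly exploit $g$-equivariance of an invariant axis, but in the parabolic case there is no such axis. I would follow the strategy of Proposition 4.12 of \cite{petyt2023hyperbolic}, combining the rough-additivity of $d_L$ along CAT(0) geodesics (Proposition \ref{rough geodesic}) with the backtracking control for $L$-separated curtains (Lemma \ref{lack backtracking}): a long gap in $[x_0, x_n]$ free of curtains of $c_n$ would force the $d_L$-mass to concentrate outside the gap at a rate incompatible with the linear lower bound, provided $L$ is enlarged sufficiently to absorb the losses from rough-additivity. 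Once uniform $T$-frequency is in hand, the conclusion follows.
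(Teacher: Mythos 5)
Your overall architecture --- establish the quasi-isometric embedding, extract linearly many curtains dual to $[x_0,x_n]$, upgrade to uniform $T$-frequency, and apply the converse of Theorem \ref{lox} --- correctly locates the difficulty, but the mechanism you propose for the crucial third step does not close the gap. Knowing $|c_n|\ge c\,n$ while $d(x_0,x_n)\le Cn$ controls only the \emph{average} spacing; it is perfectly consistent with a single curtain-free window of length, say, $\sqrt{n}$, and such a window would make the contracting constant produced by Theorem \ref{lox} grow with $n$, destroying exactly the uniformity you need. The rough-additivity route you suggest for excluding such windows also fails quantitatively: for a subsegment $[p,q]\subseteq[x_0,x_n]$ one only gets $d_L(p,q)\ge d_L(x_0,x_n)-d_L(x_0,p)-d_L(q,x_n)$, and the only a priori upper bounds $d_L(x_0,p)\le d(x_0,p)+1$ and $d_L(q,x_n)\le d(q,x_n)+1$ leave a deficit of order $d(x_0,x_n)-d_L(x_0,x_n)$, which is linear in $n$ unless the geodesic is already known to be quasi-isometrically embedded in $X_L$ --- which is essentially the statement you are trying to prove. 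So the "mass concentration" contradiction never materializes.

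The paper's proof avoids curtain-counting entirely. It notes that the orbit map $\Z\to X_L$ is a quasi-isometric embedding, that it factors through the Lipschitz map $\Z\to X$ (so the orbit is also quasi-isometrically embedded in $X$), and then invokes the adaptation of Proposition 4.12 of \cite{petyt2023hyperbolic} to conclude that the geodesics $[g^nx,g^mx]$ are \emph{uniformly} quasi-isometrically embedded in $X_L$ and hence uniformly contracting; the equivalence of contracting and Morse then forces the $\Z$-quasi-geodesic through the orbit points to stay uniformly Hausdorff-close to $[g^nx,g^mx]$, which is exactly the rank-one condition of Bestvina--Fujiwara, shown earlier in the section to be equivalent to contracting. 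Note that your closing sentence --- "combined with the first step this yields that the orbit itself is contracting" --- elides this same Morse step: Hausdorff-closeness of the orbit to the concatenation $\bigcup_k[x_k,x_{k+1}]$ says nothing about its distance to the geodesics $[x_0,x_n]$, and it is precisely the Morse property of those geodesics that pins the orbit onto them. Both missing ingredients are supplied by the argument of Proposition 4.12 of \cite{petyt2023hyperbolic}, so the proposal can be repaired, but as written the central step is not proved.
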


\begin{proof}
Consider the following map:
$\Z \xrightarrow{\phi} X \xrightarrow{\iota} X_L$, where $\phi:n\mapsto g^nx$ and $\iota$ is just the remetrization. Since $g$ is loxodromic in $X_L$, $\iota \circ \phi$ is a quasi-isometric embedding.
Because $\phi$ is Lipschitz map and $\iota$ is coarsely Lipschitz, this means that $\{g^nx:n\in \Z \}$ is also quasi-isometrically embedded in $X$. In particular, for any $n,m\in \Z$ the CAT(0) geodesic $[g^nx,g^mx]$ is uniformly quasi-isometrically embedded in $X_L$, and so is contracting by \ref{lox}. Since being contracting is equivalent to being Morse in the sense of CAT(0) space \cite{sultan2011hyperbolic}, it implies that the quasi-isometric image of any $\Z$-geodesic from $g^nx$ to $g^mx$ is uniformly Hausdorff-close to $[g^nx,g^mx]$. Hence $[g^nx,g^mx]$ stays uniformly Hausdorff-close to $\langle g \rangle x$. This shows that $g$ is rank-one, or equivalently contracting.
\end{proof}

We summarize our previous results in the following theorem:

\begin{theorem}
\label{duality}
Let $X$ be a CAT(0) space, and $g\in$ Isom($X$) be parabolic with $\tau(g)>0$. Then the following are equivalent:
\begin{enumerate}
    \item $g$ is contracting.
    \item there exists some $x\in X$ such that $[x,g^nx]$ is uniformly contracting.
    \item $g$ acts loxodromically on $X_L$ for some $L$.
    \item $g$ acts loxodromically on $X_D$.
\end{enumerate}
\begin{proof}
$(1)\implies(2)$ is by definition. We've shown $(2)\implies(3)\implies(1)$. The equivalence of $(3)$ and $(4)$ follows from the same proof as in \ref{hyperbolic contracting}.
\end{proof}
\end{theorem}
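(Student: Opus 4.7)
The plan is to close the cycle $(1) \Rightarrow (2) \Rightarrow (3) \Rightarrow (1)$ and then establish $(3) \Leftrightarrow (4)$ as an independent step. The two propositions appearing immediately above the theorem already supply $(2) \Rightarrow (3)$ (via Theorem \ref{lox}) and $(3) \Rightarrow (1)$ (via the Morse property of contracting subsets in CAT(0) spaces), so my remaining work is $(1) \Rightarrow (2)$ and the equivalence of $(3)$ and $(4)$.

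For $(1) \Rightarrow (2)$, I would invoke the argument sketched immediately after Definition \ref{contracting isometry}. Starting from a quasi-isometrically embedded, $C$-contracting orbit $\{g^n x\}$, the broken path $\bigcup_{i=0}^{n-1}[x_i, x_{i+1}]$ is a uniform quasi-geodesic; by the standard Morse/stability property of contracting sets in CAT(0) spaces, it lies within bounded Hausdorff distance of $[x_0, x_n]$. Since the contracting property is stable under finite Hausdorff perturbation, the family $\{[x_0, x_n]\}_n$ is uniformly contracting, which is exactly (2).

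For $(3) \Leftrightarrow (4)$, I would essentially reuse the proof of Theorem \ref{hyperbolic contracting}. The forward direction $(3) \Rightarrow (4)$ is immediate from $D \geq \lambda_L d_L$, which gives $\tau_D(g) \geq \lambda_L \tau_L(g) > 0$. For $(4) \Rightarrow (3)$, I would argue by contrapositive: if $\tau_L(g) = 0$ for every $L$, then splitting
\[
\tfrac{1}{n}D(x, g^n x) = \sum_{L=1}^{N} \lambda_L \tfrac{d_L(x, g^n x)}{n} + \sum_{L=N+1}^{\infty}\lambda_L \tfrac{d_L(x,g^n x)}{n}
\]
at a finite cutoff $N$, bounding the tail via $d_L \leq d_\infty = \lceil d \rceil$, and letting first $n \to \infty$ and then $N \to \infty$ forces $\tau_D(g) \leq \tau(g) \sum_{L>N}\lambda_L \to 0$, contradicting (4). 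This uses $\tau(g) < \infty$, which is automatic since $\tau(g) \leq d(x,gx)$.

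The main conceptual hurdle is to notice that this last equivalence transfers verbatim from the semisimple setting: the only ingredient the hyperbolic-case proof of Theorem \ref{hyperbolic contracting} drew on was finiteness of $\tau(g)$, never semisimplicity. So no genuinely new technical obstacle appears in the parabolic case, and all nontrivial input is carried by the two propositions preceding the theorem together with the tail-splitting estimate for $D$.
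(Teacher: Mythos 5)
Your proposal is correct and follows essentially the same route as the paper: the implications $(2)\Rightarrow(3)\Rightarrow(1)$ come from the two preceding propositions, $(3)\Leftrightarrow(4)$ is the tail-splitting argument recycled verbatim from Theorem \ref{hyperbolic contracting} (which, as you observe, never used semisimplicity), and your $(1)\Rightarrow(2)$ is exactly the rank-one/Morse discussion the paper gives after Definition \ref{contracting isometry} and then compresses to ``by definition'' in its proof. If anything, you spell out that last step more carefully than the paper does, but the content is identical.
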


\begin{remark}
The interesting part of the above theorem is the equivalence of $g$ being contracting and $[x,g^nx]$ being uniformly contracting when $g$ is a parabolic isometry with $\tau(g)>0$. This equivalence doesn't involve our new model $X_D$ and is a general fact about the CAT(0) space. 

It is worth noting that the former of course implies the latter by definition, but the converse is not always true if we drop the condition $\tau(g)>0$. For example, we consider $X=\H^2$ and $g:z\mapsto z+1$. Then $g$ clearly doesn't have a contracting orbit but for any $x,n$ we have $[x,g^nx]$ is uniformly contracting, since any geodesic in the hyperbolic plane is uniformly contracting.
\end{remark}

The following corollary is immediate after we include the trivial case of $g$ being hyperbolic:

\begin{corollary}
    Let $X$ be a CAT(0) space, and $g\in$ Isom($X$) with $\tau(g)>0$. Then $g$ is contracting if and only if there exists $x\in X$ such that $[x,g^nx]$ is uniformly contracting.
\end{corollary}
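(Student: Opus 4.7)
My plan is to split the argument by the type of $g$. Since $\tau(g)>0$ precludes $g$ from being elliptic (elliptic isometries of a CAT(0) space fix a point, forcing $\tau(g)=0$), there are exactly two cases: $g$ is a hyperbolic (semisimple) isometry with $\tau(g)>0$, or $g$ is a parabolic isometry with $\tau(g)>0$. The parabolic case is already contained in the equivalence $(1)\Leftrightarrow(2)$ of Theorem \ref{duality}, so the entire content of the corollary is to absorb the hyperbolic case into the same statement.

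For the hyperbolic case, both directions are short. In the forward direction, suppose $g$ is hyperbolic and contracting. By the remark following Definition \ref{contracting isometry}, $g$ then admits a contracting axis $l$; fix any $x \in X$ and let $p$ denote a closest point projection of $x$ to $l$. Because $g$ acts on $l$ by translation, $g^n p$ is a closest point projection of $g^n x$, and convexity of CAT(0) metrics forces $[x,g^n x]$ to lie within Hausdorff distance at most $d(x,l)$ of the sub-segment $[p,g^n p] \subseteq l$. Since contracting is invariant under finite Hausdorff perturbation (noted in the remark after Definition \ref{contracting isometry}), each $[x,g^n x]$ is uniformly $C$-contracting with $C$ depending only on the contracting constant of $l$ and on $d(x,l)$. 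In the backward direction, I would reuse the proof of the proposition preceding Theorem \ref{duality}: the argument that uniform $K$-contracting of $[x,g^n x]$ forces $g$ to act loxodromically on some $X_L$ only uses Theorem \ref{lox} together with the bound $d(x,g^n x)\geq n\tau(g)$, which holds for any isometry with $\tau(g)>0$ and does not depend on parabolicity. This yields $L=10K+3$ and $d_L(x,g^n x)\geq \tfrac{n\tau(g)}{8K}-10$, hence $\tau_L(g)>0$. Theorem \ref{hyperbolic contracting} then upgrades loxodromic action on $X_L$ to $g$ being contracting, closing the loop.

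The only mild obstacle is verifying that the proof given in the parabolic case genuinely extends: one must notice that the use of $\tau(g)>0$ in that earlier proposition was the only place parabolicity entered, so the same computation transports verbatim to the hyperbolic setting. Once this observation is made, the two cases combine into the uniform statement, and the corollary follows immediately from Theorem \ref{duality} together with Theorem \ref{hyperbolic contracting}.
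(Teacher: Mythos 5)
The proposal is correct and takes essentially the same route as the paper, which dismisses the hyperbolic case as ``trivial'' and invokes Theorem \ref{duality} for the parabolic one; you have simply written out the trivial case explicitly (forward direction via the contracting axis and Hausdorff-closeness, backward direction by rerunning the proposition's computation, which indeed only uses $d(x,g^nx)\ge n\tau(g)$, and then Theorem \ref{hyperbolic contracting}). No gaps.
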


\subsection{Sublinear Axis}
When we look back at our classification of the dynamics of semisimple actions, we made essential use of the "natural objects" associated with the actions, say the axes of hyperbolic actions. In this section, we will introduce a \say{natural object} to look at if $g$ is a parabolic isometry with positive translation length. Let's first recall the following definitions.

\begin{definition}[Visual Boundary, Definition 8.1 of Chapter II in \cite{Bridson1999MetricSO}]
Let $X$ be a metric space. Two geodesics rays $\alpha,\alpha':[0,\infty)\rightarrow X$ are said to be \emph{asymptotic} if there exists a constant $K$ such that $d(
(\alpha(t),\alpha(t'))\leq K$ for all $t\ge 0$. The set $\partial X$ of \emph{boundary points} of $X$ is the set of equivalence class of geodesic rays - two rays are equivalent if and only if they're asymptotic. If $\alpha$ is a geodesic ray, then we denote $\alpha(\infty)$ as its corresponding equivalence class in $\partial X$.
\end{definition}

\begin{definition}[Visibility Space, Definition 9.28 of Chapter II in \cite{Bridson1999MetricSO}]

Let $X$ be a CAT(0) space, and $\partial X$ be its visual boundary. A point $\xi \in \partial X$ is said to be a \emph{visibility point} if for any other $\eta \in \partial X$, there is a geodesic $\alpha$ such that 
$\alpha(-\infty)=\eta$ and $\alpha(\infty)=\xi$. We say $X$ is a \emph{visibility space} if each boundary point is a visibility point.

\end{definition}
We consider a special case of Theorem 2.1 of \cite{KarlssonMargulis} and follow the notations there by setting: the measure space to be a single point $p$, 
$L=$ identity map, $\omega(x)=g$ and  $D=X$. Here $X$ is a complete CAT(0) space and $g$ is a parabolic isometry of $X$. We then obtain the following theorem:
\begin{theorem}[Theorem 2.7 of \cite{Wu2017}, Karlsson-Margulis] 
\label{sublinear-axis}
Let $X$ be a complete CAT(0) space with a base point $p\in M$ and $g$ be a parabolic isometry with $\tau(g)>0$. Then there exist a unique $\xi \in \partial X$ which is fixed by $g$, and a geodesic ray $\alpha$ such that $\alpha(0)=p$, $\alpha(\infty)=\xi$, and
\[
\lim_{n\rightarrow \infty}\frac{d(g^np,\alpha(\tau(g)\cdot n))}{n}=0.
\]
\end{theorem}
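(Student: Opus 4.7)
The plan is to specialize the Karlsson--Margulis ergodic theorem to this deterministic setting, where the subadditive cocycle is simply $(n,m) \mapsto d(g^n p, g^m p)$. Writing $a_n := d(p, g^n p)$, the sequence is subadditive because $g$ is an isometry, so Fekete's lemma gives $a_n/n \to \ell := \tau(g) > 0$. This is the only hypothesis about $g$ that I will really use, beyond completeness of $X$.

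The first step I would carry out is Karlsson's subadditive lemma: for every $\epsilon > 0$, there are infinitely many indices $n$ at which
$$a_n - a_m \geq \ell(n-m) - \epsilon n \qquad \text{for all } 0 \leq m \leq n.$$
This is a purely combinatorial fact, proved by a Ces\`aro-type counting argument applied to $(a_n)$. Combining it with $d(g^m p, g^n p) = a_{n-m}$ shows that at such a good $n$ the triangle with vertices $p, g^m p, g^n p$ is nearly degenerate in the sense that $a_m + a_{n-m} \leq a_n + \epsilon n$. By CAT(0) comparison with a Euclidean triangle of the same side lengths, this forces $g^m p$ to lie within $O(\sqrt{\epsilon}\, n)$ of the point on $[p, g^n p]$ at arc-length parameter approximately $\ell m$.

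The second step is to pass to the limit. Along a diagonal subsequence of good $n \to \infty$ with $\epsilon_n \to 0$, I would work in the horofunction compactification of $X$ (which exists for any metric space and avoids the need for local compactness) and extract a limit of the horofunctions $x \mapsto d(x, g^n p) - d(p, g^n p)$. Completeness of $X$ ensures that the resulting limit horofunction is represented by a genuine geodesic ray $\alpha$ starting at $p$; its endpoint at infinity is the sought $\xi$, and the sublinear estimate $d(g^n p, \alpha(\ell n)) = o(n)$ transfers directly from the finite-scale estimate of the previous paragraph.

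The final step is to verify $g \xi = \xi$ and uniqueness. Applying $g$ to the defining tracking relation shows that $g\alpha$ sublinearly tracks the shifted orbit $(g^{n+1} p)$, which in turn is sublinearly tracked by $\alpha$; hence $g\alpha$ and $\alpha$ are asymptotic and $g\xi = \xi$. Uniqueness of $\xi$ among such tracking endpoints is immediate from the triangle inequality, since any two sublinearly tracking rays $\alpha, \alpha'$ satisfy $d(\alpha(\ell n), \alpha'(\ell n)) = o(n)$ and hence are asymptotic. The main obstacle I anticipate is producing the limiting ray in the second step without a properness assumption; the horofunction-compactification route is the standard way to handle this, and I would rely on it rather than on the visual boundary directly.
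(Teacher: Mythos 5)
The paper does not actually prove this statement; it imports it as a special case of the Karlsson--Margulis multiplicative ergodic theorem (Theorem 2.1 of \cite{KarlssonMargulis}, via Theorem 2.7 of \cite{Wu2017}) with the measure space collapsed to a point. Your proposal is a reconstruction of that proof, and the skeleton --- subadditivity and Fekete, Karlsson's good-times lemma, near-degeneracy of the triangles with vertices $p, g^mp, g^np$ at good times, CAT(0) comparison, and the convexity arguments for $g\xi=\xi$ and uniqueness --- is the correct one; the last two steps as you state them are fine. One small imprecision along the way: the good-time inequality alone gives $a_m + a_{n-m} - a_n \le \epsilon n + \bigl(a_{n-m} - \ell(n-m)\bigr)$, so to get the claimed defect bound you also need the (easy) observation that $\sup_{k\le n}(a_k - \ell k) = o(n)$.

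The genuine gap is the step that produces the ray. The assertion that completeness of $X$ ensures the limit horofunction is represented by a genuine geodesic ray is false for complete CAT(0) spaces that are not proper: in an infinite-dimensional Hilbert space the normalized distance functions based at $x_n = n e_n$ converge pointwise to the zero function, which is a horofunction limit but is not the Busemann function of any ray. Since the theorem is stated (and used in the paper) without properness, the horofunction compactification by itself does not hand you $\alpha$. What rescues the argument --- and is the actual content of Karlsson--Margulis --- is that the limit horofunction $b$ arising here decays at asymptotically unit speed along the orbit, $b(g^mp) \le -(\ell-\epsilon)m$ while $d(p,g^mp)\sim \ell m$; combining this with convexity of the metric one shows that the initial segments of the geodesics $[p,g^{n_i}p]$ taken at good times form a Cauchy sequence, and only then does completeness yield the limit ray. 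One must also interleave the good-time sets for a sequence $\epsilon_l \to 0$ with some care to obtain tracking at \emph{all} times $n$ rather than only along the good subsequence. These are not formalities; they are where the theorem lives. Given that the result is in the literature, citing \cite{KarlssonMargulis} as the paper does is the cleaner route; a self-contained proof must supply the convexity/Cauchy argument you currently elide.
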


We call $\alpha$ the sublinear axis of the isometry $g$ with endpoint $\xi$ at infinity. With the help of this natural object, Wu proved the following:

\begin{theorem}[Theorem 1.2 of \cite{Wu2017}]
\label{structure}
Let $X$ be a complete, visibility CAT(0) space. Then a parabolic isometry $g$ has $\tau(g)>0$ if and only if there exists a $g$-invariant subset $U\times \R$ which is closed and convex in $X$, such that $g$ acts on $U\times \R$ as 

\[
g \circ (x,t)=(g_1(x),t+t_0) \qquad \forall (x,t)\in U\times \R
\]
where $g_1$ is a parabolic isometry with translation length $0$ on $U$.
\end{theorem}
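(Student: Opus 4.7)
The forward implication is routine: if $U \times \R \subseteq X$ is a $g$-invariant closed convex product on which $g(x,t) = (g_1 x, t+t_0)$, then picking any $x \in U$ and evaluating $d((x,0), g^n(x,0)) = \sqrt{d(x,g_1^n x)^2 + n^2 t_0^2}$ in combination with $\tau(g_1)=0$ gives $\tau(g) = |t_0| > 0$. Moreover, $g$ has no fixed point on $U \times \R$ (since $g_1$ is fixed-point-free and $t_0 \neq 0$), and no fixed point off the product either, because convexity of $x \mapsto d(x,gx)$ and $g$-invariance of $U \times \R$ force the infimum of $d(\cdot, g\cdot)$ to be approached inside the $g$-invariant convex set. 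Hence $g$ is parabolic.

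For the converse I would extract the boundary fixed point $\xi$ provided by Theorem \ref{sublinear-axis} and then use visibility to assemble a $g$-invariant parallel set $P \cong U \times \R$. Karlsson--Margulis produces $\xi \in \partial X$ fixed by $g$ and a geodesic ray $\alpha$ from the base point $p$ with $g^n p$ sublinearly close to $\alpha(\tau(g)n)$. Since $g$ fixes $\xi$, the Busemann function $b = b_\xi$ satisfies $b \circ g = b - c$ for some constant $c$, and comparing $b(g^n p)$ with the sublinear axis estimate identifies $c = \tau(g)$. By visibility, for every $\eta \in \partial X \setminus \{\xi\}$ there is a bi-infinite geodesic from $\eta$ to $\xi$, and any two such lines sharing the endpoint $\xi$ that stay at bounded Hausdorff distance bound a flat strip by the CAT(0) flat strip theorem. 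Define $P$ to be the union of all bi-infinite geodesics ending at $\xi$; the flat strip structure organizes $P$ into a parallel set with canonical decomposition $P \cong U \times \R$, where the $\R$-coordinate is $-b$. The set $P$ is $g$-invariant because $g$ permutes geodesic lines ending at $\xi$, the Busemann cocycle forces the $\R$-component of $g|_P$ to be translation by $\tau(g)$, and the induced isometry $g_1$ on $U$ must be strictly parabolic: a fixed point $y \in U$ of $g_1$ would give a $g$-invariant line $\{y\} \times \R$ and hence a second $g$-fixed boundary point, contradicting uniqueness in Karlsson--Margulis, while $\tau(g_1) > 0$ would yield an axis for $g$ inside $P$, contradicting the parabolicity of $g$ in $X$.

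The main obstacle is assembling $P$ into a genuine global parallel set that is closed, convex, and $g$-invariant. Visibility supplies lines one at a time, so one must apply the flat strip theorem pairwise and use completeness of $X$ to take limits of parallel families in order to secure a coherent product structure. A concrete route is to begin with the closed convex hull of the $g$-orbit of a single line ending at $\xi$ and argue via flat strips that this hull already splits as $U \times \R$, then extend by taking unions of all such parallel families and verifying maximality. A related subtlety is that Karlsson--Margulis only produces a ray to $\xi$, not a line; the visibility hypothesis is essential precisely because it supplies the opposite half of such a line, which is what gets the construction off the ground.
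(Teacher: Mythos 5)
Your forward direction is fine (modulo noting that $t_0\neq 0$ is implicit in the statement), but the converse has a genuine gap at the point where you assemble the product. You define $P$ as the union of \emph{all} bi-infinite geodesics ending at $\xi$ and appeal to the flat strip theorem to organize $P$ into a parallel set $U\times\R$. The flat strip theorem, however, applies only to two lines at bounded Hausdorff distance, i.e.\ lines sharing \emph{both} endpoints at infinity; two lines that merely share the endpoint $\xi$ are in general not parallel and bound no flat strip. In $\H^2$, which is a visibility space, the union of all lines ending at a fixed $\xi$ is the whole plane, with no product structure. The same problem defeats your fallback of taking the convex hull of the $g$-orbit of a single line $\ell$ ending at $\xi$: if the second endpoint of $\ell$ is not fixed by $g$, then $\ell$ and $g\ell$ are asymptotic only in the $\xi$-direction, need not be at bounded Hausdorff distance, and the hull need not split.

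The missing ingredient is a \emph{second} $g$-fixed point $\eta\neq\xi$ on $\partial X$; this is exactly Proposition \ref{fix2points} (Proposition 3.1 of \cite{Wu2017}): an isometry of a complete CAT(0) space with $\tau(g)>0$ fixes at least two boundary points. Visibility is then used once, to produce a single bi-infinite geodesic $\alpha$ from $\eta$ to $\xi$, and the correct $P$ is the parallel set $P_\alpha$ of all lines bi-asymptotic to $\alpha$, which splits as $U\times\R$ by Theorem 2.14 of Chapter II in \cite{Bridson1999MetricSO} and is $g$-invariant because $g$ fixes both endpoints of $\alpha$. This is the route the paper itself reproduces in the proof of Proposition \ref{unbounded sublinear axis}. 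The existence of this second fixed boundary point also shows that your argument that a $g_1$-fixed point in $U$ would ``contradict uniqueness in Karlsson--Margulis'' cannot be right as stated---$g$ genuinely fixes at least two boundary points, and the uniqueness in Theorem \ref{sublinear-axis} concerns only the limit point of the orbit. The correct contradiction is that a $g_1$-fixed point $y$ would make $g$ act as a translation on the line $\{y\}\times\R$, hence semisimple, contradicting parabolicity (this is how the paper rules out $U$ bounded, via Theorem 6.7 of Chapter II in \cite{Bridson1999MetricSO}). Similarly, ruling out $\tau(g_1)>0$ is not as immediate as ``an axis for $g$ inside $P$,'' since $g_1$ could itself be parabolic with positive translation length; this case needs a separate argument.
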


From Theorem \ref{structure}, we can determine the behavior of a parabolic isometry $g$ with $\tau(g)>0$ in the curtain model of a complete visibility CAT(0) space:

\begin{corollary}
\label{complete+visibility}
Let $X$ be a complete visibility CAT(0) space, and $g\in$ Isom($X$) is parabolic with $\tau(g)>0$. Then $g$ is an elliptic element in the curtain model $X_D$.
\end{corollary}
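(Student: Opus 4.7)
The plan is to combine Wu's structure theorem (Theorem \ref{structure}) with the product-boundedness result for curtain models proved earlier in the paper. By Theorem \ref{structure}, since $X$ is a complete visibility CAT(0) space and $g$ is parabolic with $\tau(g)>0$, there is a $g$-invariant closed convex subset $Z\subseteq X$ splitting isometrically as $U\times\R$, on which $g$ acts as $(u,t)\mapsto(g_1(u),t+t_0)$, where $g_1$ is a parabolic isometry of $U$ with translation length $0$.

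Next, I would verify that both factors of this splitting are unbounded. The $\R$-factor is obviously unbounded. For the $U$-factor, note that $Z$ is closed inside the complete space $X$ and hence complete, and so is $U$ as one of its factors. Then $g_1$ is a parabolic isometry of a complete CAT(0) space, so in particular it has no fixed point; by Proposition 6.7 of Chapter II.6 of Bridson-Haefliger (recalled in the remark following the classification of CAT(0) isometries), any isometry of a complete CAT(0) space with a bounded orbit must fix a point, so $g_1$ has an unbounded orbit and $U$ is unbounded. The product proposition preceding Corollary \ref{product} then applies to the closed convex subset $Z=U\times\R$ of $X$, both of whose factors are unbounded, yielding that $Z$ is bounded inside $X_D$.

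To finish, pick any $x\in Z$; since $g$ stabilizes $Z$, the orbit $\{g^n x:n\in\Z\}$ is contained in $Z$ and is therefore bounded in $X_D$. As $X_D$ is $\delta$-hyperbolic, the classification of isometries of hyperbolic spaces forces any isometry with a bounded orbit to be elliptic, so $g$ is elliptic in $X_D$. The only step requiring real thought is the unboundedness of the $U$-factor, which I would handle via the bounded-orbit fixed-point result for complete CAT(0) spaces applied to $g_1$; once that is in place, the product-boundedness proposition together with the hyperbolicity of $X_D$ finishes the argument immediately, without any analysis of the dynamics on $X_D$ itself.
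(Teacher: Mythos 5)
Your proposal is correct and follows essentially the same route as the paper: invoke Theorem \ref{structure} to get the $g$-invariant closed convex strip $U\times\R$, argue $U$ is unbounded because $g_1$ is parabolic, apply the product-boundedness result for closed convex subsets to bound the orbit in $X_D$, and conclude ellipticity from the bounded orbit. In fact you are slightly more careful than the paper on two points --- justifying the unboundedness of $U$ via completeness and the bounded-orbit fixed-point theorem, and citing the proposition about closed convex product subsets of $X$ rather than the standalone product corollary --- but the argument is the same.
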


\begin{proof}
From the preceding theorem, we see that we can find an infinite flat strip $U\times \R$ which is a $g$-invariant convex closed subset of $X$ on which $g$ splits as in the theorem. Note that $g_1$ is a parabolic isometry with translation length $0$ on $U$. In particular, this means that $U$ must be unbounded. Therefore, by Corollary \ref{product}, the curtain model for $U\times \R$ is bounded, say has diameter $R$ since both $U$ and $\R$ are unbounded. Pick $x\in U\times \R$, then $D(x,g^nx) \leq R$ since $U \times \R$ is $g-$invariant. Hence $g$ must be elliptic in $X_D$.
\end{proof}

In fact, in the next two subsections, we will prove a similar result under a more general condition that $X$ is a proper and complete CAT(0) space, by splitting into two cases on whether the corresponding sublinear axis is bounded or not.

\subsection{Bounded Sublinear Axis}

We first deal with the case that the sublinear axis is bounded in $X_L$ under $g$. We say a curtain $h$ touches a geodesic $\alpha$ if $h\cap \alpha \neq \emptyset$; we say $h$ crosses $\alpha$ if there exists $p,q\in \alpha$ such that $p\in h^-$ and $q\in h^+$.

\begin{proposition}
Let $X$ be a complete CAT(0) space with base point $p$, and $g$ be a parabolic isometry with $\tau(g)>0$. Let $\alpha$ and $\xi=\alpha(\infty)$ be determined by Theorem \ref{sublinear-axis}. Fix $L\ge 1$. If $\alpha$ is bounded in $X_L$, then $g$ is elliptic in $X_L$.
\end{proposition}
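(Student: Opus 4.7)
The plan is to combine the Karlsson--Margulis sublinear axis $\alpha$ with the hypothesis $M:=\mathrm{diam}_{X_L}(\alpha)<\infty$ to show that $\sup_{n\in\Z}d_L(p,g^n p)<\infty$, which is the definition of $g$ being elliptic in $X_L$. Write $\beta_n:=\alpha(\tau(g)n)$, so that $d_L(p,\beta_n)\le M$ for every $n$ and, by Theorem \ref{sublinear-axis}, $d(g^n p,\beta_n)=o(n)$. The triangle inequality $d_L(p,g^n p)\le M+d_L(\beta_n,g^n p)$ reduces the problem to bounding $d_L(\beta_n,g^n p)$ uniformly in $n$. Using Proposition \ref{dualize}, I may further assume that any $L$-chain separating $\beta_n$ from $g^n p$ consists of curtains dual to the CAT(0) geodesic $[\beta_n,g^n p]$, which has length $o(n)$.

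The main obstacle is that the naive bound $d_L(\beta_n,g^n p)\le \lceil d(\beta_n,g^n p)\rceil = o(n)$ only yields $\tau_L(g)=0$, which is consistent with parabolic (unbounded-orbit) behavior in $X_L$ and therefore insufficient for ellipticity. To sharpen this to a uniform $O(1)$ bound, I will exploit that $g$ fixes $\xi=\alpha(\infty)$ and acts on $X_L$ by isometry, so the translated ray $g^n\alpha$ is itself a geodesic ray from $g^n p$ to $\xi$ with $X_L$-diameter equal to $M$. I then classify each $L$-chain curtain $h$ dual to $[\beta_n,g^n p]$ by where $p$ sits: curtains with $p$ on the $g^n p$-side form an $L$-sub-chain separating $p$ from $\beta_n$ and contribute at most $M-1$ members; at most one curtain contains $p$ by disjointness of $L$-separated curtains; and the rest, which I call \emph{transverse}, have both $p$ and $\beta_n$ on the same side of $h$. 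A symmetric partition using the position of $g^n p$ relative to $g^n\alpha$ should similarly isolate at most $M-1$ curtains crossing $g^n\alpha|_{[0,\tau(g)n]}$.

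The hardest step, and the crux of the proof, is that the remaining transverse curtains a priori only satisfy the circular bound of at most $d_L(p,g^n p)-1$. Breaking the circularity requires an independent combinatorial estimate, and this is the main obstacle. I plan to apply Lemma \ref{lack backtracking} to rule out $\alpha$ or $g^n\alpha$ traversing transverse curtains in backtracking patterns, combined with the fact that $\alpha$ (and hence its isometric image $g^n\alpha$) is not contracting---which follows from $M<\infty$ via the contrapositive of Theorem \ref{lox}---to show that only finitely many $L$-separated transverse curtains can coexist, yielding a bound $C(L,M)$ independent of $n$. Once this transverse bound is established, one concludes $d_L(\beta_n,g^n p)\le 2M+C(L,M)$ and therefore $d_L(p,g^n p)\le 3M+C(L,M)$ for all $n$, proving that $g$ is elliptic in $X_L$.
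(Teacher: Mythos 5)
Your outline reduces correctly to bounding $d_L(\beta_n,g^n p)$, and your partition of a separating $L$-chain (curtains separating $p$ from $\beta_n$, the one containing $p$, and the ``transverse'' ones) is sound as bookkeeping. But the crux --- a bound $C(L,M)$, independent of $n$, on the transverse curtains --- is only announced, not proved, and the two tools you propose cannot deliver it. First, boundedness of $\alpha$ in the single fixed $X_L$ does not imply that $\alpha$ is non-contracting: Theorem \ref{lox} says a $D$-contracting geodesic admits a $(10D+3)$-chain met frequently, which makes $\alpha$ unbounded in $X_{10D+3}$; since $d_L\le d_{L'}$ for $L\le L'$, this gives no information about $X_L$ when $10D+3>L$, so the contrapositive you invoke is not available for the given $L$. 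Second, Lemma \ref{lack backtracking} only constrains curtains that the geodesic actually \emph{meets} (it needs points of the geodesic inside both curtains), whereas the problematic transverse curtains are precisely those that may be disjoint from both $\alpha$ and $g^n\alpha$; nothing in your setup prevents arbitrarily many pairwise $L$-separated curtains from lying between the two rays as $n$ grows. That is exactly the circularity you flagged, and it remains unbroken.

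The paper breaks it with an idea absent from your proposal: since $\xi$ is fixed by $g$ (Theorem \ref{sublinear-axis}), the rays $\alpha$ and $g^n\alpha$ are asymptotic, so $d(\alpha(t),g^n\alpha(t))$ is uniformly bounded in $t$; one then shows that no two $L$-separated curtains can simultaneously separate $\alpha$ from $g^n\alpha$, because projecting far-out points of $g^n\alpha$ onto $\alpha$ produces an arbitrarily long chain of curtains dual to $\alpha$, each containing a bridge segment $[x_m,y_m]$ crossing both separating curtains, contradicting $L$-separation. Combined with a count showing that in a chain separating $p$ from $g^np$ at most $K+L+3$ curtains touch $\alpha$ and at most $K+L+3$ touch $g^n\alpha$ (Lemma \ref{lack backtracking} handles the curtains that cross a ray more than once), this yields $d_L(p,g^np)\le 2K+2L+8$ directly, with no need for the $o(n)$ estimate. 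Without an argument of this kind --- i.e., without exploiting the asymptoticity of the two rays to exclude curtains missing both --- your transverse bound $C(L,M)$ is unsupported, so the proof as proposed does not go through.
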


\begin{proof}

We'll show that if $\alpha$ is bounded in $X_L$, then $p$ has a bounded orbit in $X_L$.

Assume that $\alpha$ has diameter $K+1$ for some $K$, but $p$ has unbounded orbit in $X_L$. Then we can find $n$ such that $d_L(p,g^np)\ge 2K+2L+9$. Suppose $h'_1,\cdots,h'_{2K+2L+8}$ realize this. Note that there can be at most $K$ of such curtains crossing $\alpha$ once, and there can be at most one curtain that touches but not crosses $\alpha$. Moreover, by Lemma \ref{lack backtracking} we see there can be at most $L+2$ curtains that cross $\alpha$ at least twice. Therefore in summary there can be at most $K+L+3$ curtains that touch $\alpha$. Similarly, there can be at most $K+L+3$ curtains touching $g^n\alpha$. 

This means there will be at least two curtains, say $h'_1$ and $h'_2$, that intersect neither $\alpha$ nor $g^n\alpha$. Further, because $h'_1$ and $h'_2$ separates $p$ and $g^np$, we see that $h'_1$ and $h'_2$ actually both separate $\alpha$ and $g^n\alpha$. Note also since $\alpha$ and $g^n\alpha$ are asymptotic by Theorem \ref{sublinear-axis}, we know $d(\alpha(t),g^n\alpha(t))\leq M$ for some $M$ and all $t\ge0$.

We now inductively construct a chain of disjoint curtains $(h_i)$ dual to $\alpha$ that meets both $h'_1$ and $h'_2$. For the base case, pick $y_1\in g^n\alpha(\R)$ such that $d(y_1,p)\ge M+2$, and suppose $x_1$ be its closest point projection onto $\alpha$. Then $d(x_1,p)\ge d(p,y_1)-d(x_1,y_1)\ge(M+2)-M=2$, therefore we can take $h_1$ to be the curtain dual to $\alpha$ at $x_1$. Indeed $h_1$ contains $[x_1,y_1]$ and $[x_1,y_1]$ intersects $h'_1$, so $h_1$ meets $h'_1$. For a similar reason, we see $h_1$ meets $h'_2$.

Suppose we've constructed $h_1,\cdots,h_{m-1}$, and suppose $h_{m-1}$ is dual to $\alpha$ at $x_{m-1}$. We pick $y_m\in g^n\alpha(\R)$ after all the intersections of the previous curtains with $g^n\alpha$ and ensure that $d(y_m,[p,x_{m-1}])\ge M+2$.Still, let $x_m$ be the closest point projection of $y_m$ onto $\alpha$. We see as before that $d(x_m,x_{m-1})\ge d(y_m,x_{m-1})-d(y_m,x_m) \ge (M+2)-M=2$, and $x_m$ must lie after $x_{m-1}$ on $\alpha$. We let $h_m$ to be the curtain dual to $\alpha$ at $x_m$, then $h_m$ is indeed disjoint from $h_1,\cdots,h_{m-1}$ and contains $[x_m,y_m]$, hence meets $h'_1$ and $h'_2$.

Therefore, we can find an infinite chain that meets both $h'_1$ and $h'_2$, contradicting the fact that $h'_1$ and $h'_2$ are $L$-separated. Hence our assumption that $p$ has an unbounded orbit is wrong. So $g$ has a bounded orbit in each $X_L$.
\end{proof}

Note that in the proof of the above proposition we give a linear control of the $L$-distance on the orbit of $p$ by the diameter of $\alpha$ in $X_L$. We'll see that this enables us to pass the boundedness of the orbit in $X_L$ to the boundedness in $X_D$.

\begin{proposition}
\label{bounded sublinear axis}
Let $X$ be a complete CAT(0) space with base point $p$, and $g$ be a parabolic isometry with $\tau(g)>0$. Let $\alpha$ and $\xi=\alpha(\infty)$ be determined by Theorem \ref{sublinear-axis}. If $\alpha$ is bounded in $X_D$, then $g$ is elliptic in $X_D$.
\end{proposition}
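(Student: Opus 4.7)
The plan is to upgrade the linear $L$-by-$L$ control from the previous proposition into a single $D$-bound on the orbit $\langle g\rangle p$. Recall that the proof of the preceding result shows that, for every $L$ and every $n$,
\[
d_L(p,g^np) \;\le\; 2R_L+2L+6,
\]
where $R_L := \mathrm{diam}_L(\alpha)$. Summing against $\lambda_L$ gives $D(p,g^np) \le 2\sum_L \lambda_L R_L + 2\sum_L \lambda_L L + 6$, and since $L\le L^2$ for $L\ge 1$ we have $\sum_L \lambda_L L \le \Lambda$. So the entire proposition reduces to proving $\sum_L \lambda_L R_L<\infty$ whenever $\alpha$ is $D$-bounded: then the orbit is $D$-bounded, and an isometry of the hyperbolic space $X_D$ with a bounded orbit is elliptic.

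Establishing the summability $\sum_L \lambda_L R_L<\infty$ is the hard step and the main obstacle. The naive estimate $\lambda_L R_L \le D(x,y)\le R$ (where $R=\mathrm{diam}_D(\alpha)$) only controls individual terms, not the sum. The key input is that $\alpha$ is a geodesic ray, so $d_L$ behaves rough-monotonically along it: parametrizing $\alpha:[0,\infty)\to X$ with $\alpha(0)=p$ and writing $f_L(t):=d_L(p,\alpha(t))$, Proposition \ref{rough geodesic} applied with the midpoint $\alpha(s)\in [p,\alpha(t)]$ for $s\le t$ yields $f_L(s)\le f_L(t)+3L+3$. Setting $M_L := \sup_t f_L(t)$ (finite by the crude bound $M_L \le R/\lambda_L$), this forces $\liminf_{t\to\infty} f_L(t) \ge M_L - 3L - 3$.

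Now Fatou's lemma applied to the counting measure weighted by $(\lambda_L)$ gives
\[
\sum_L \lambda_L \liminf_{t\to\infty} f_L(t) \;\le\; \liminf_{t\to\infty} \sum_L \lambda_L f_L(t) \;=\; \liminf_{t\to\infty} D(p,\alpha(t)) \;\le\; R.
\]
Combining with the previous step, $\sum_L \lambda_L M_L \le R+3\Lambda+3$. By the triangle inequality in $X_L$, any two points on $\alpha$ can be routed through $p$, so $R_L \le 2M_L$, yielding $\sum_L \lambda_L R_L \le 2R + 6\Lambda + 6<\infty$. Substituting back bounds $D(p,g^np)$ uniformly in $n$, and the classification of isometries of hyperbolic spaces produces the desired conclusion that $g$ is elliptic in $X_D$. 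All real content lies in the Fatou-plus-rough-monotonicity step; the rest is linear-in-$L$ bookkeeping against the finite constant $\Lambda$.
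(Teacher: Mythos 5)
Your proof is correct and takes essentially the same route as the paper's: both reduce the statement to the summability $\sum_L \lambda_L\,\mathrm{diam}_L(\alpha)<\infty$, obtain this from the single $D$-bound on $\alpha$ via the rough-geodesic inequality of Proposition \ref{rough geodesic} applied along the ray, and then feed it back into the linear-in-$L$ orbit control from the preceding proposition. The only difference is packaging: where you invoke rough monotonicity of $t\mapsto d_L(p,\alpha(t))$ plus Fatou's lemma at $t\to\infty$, the paper truncates to $L\le N$, picks one point $b$ on $\alpha$ beyond all the witnesses $b_L$, and lets $N\to\infty$ --- the same interchange argument in elementary form.
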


\begin{proof}
$\alpha$ is bounded in $X_D$, say by $K$. We know $\alpha$ is bounded in $X_L$. Assume that $\alpha$ has diameter $K_L$ in $X_L$. Then according to the proof of the above proposition we see $d_L(p,g^np)\leq 2K_L+2L+8$ for each $L$. Suppose $a_L,b_L\in\alpha$ realizes $K_L$: $d_L(a_L,b_L)=K_L$. Fix $N>0$. Take $b=\max_{1 \leq L \leq N}b_L$. Then:
\begin{align*}
&K \ge D(p,b) \ge \sum_{L=1}^N \lambda_Ld_L(p,b) \ge \sum_{L=1}^N \lambda_L(d_L(p,a_L)+d_L(a_L,b_L)+d(b_L,b)-(6L+6))\\
&\ge \sum_{L=1}^N \lambda_L(K_L-6L-6),
\end{align*}
by Proposition \ref{rough geodesic}. 

But the choice of $N$ is arbitrary, therefore $K\ge \sum_{L=1}^\infty \lambda_L(K_L-6L-6) \implies \sum_{L=1}^\infty \lambda_LK_L \leq K+\sum_{L=1}^\infty \lambda_L(6L+6) < \infty$. This shows that for any $n>0$, we have: 
\begin{align*}
&D(p,g^np)=\sum_{L=1}^\infty \lambda_L d_L(p,g^np) \leq \sum_{L=1}^\infty \lambda_L(2K_L+2L+8) = 8+2\sum_{L=1}^\infty \lambda_LK_L +2\sum_{L=1}^\infty L\lambda_L\\
& \leq 8+2(K+\sum_{L=1}^\infty \lambda_L(6L+6))+2\sum_{L=1}^\infty L\lambda_L< \infty.
\end{align*}
So $g$ is elliptic in $X_D$.
\end{proof}

\subsection{Unbounded Sublinear Axis}

We use $\partial X_D$ to denote the Gromov boundary of the hyperbolic space $X_D$. In view of the fact that CAT(0) geodesics are unparametrized rough geodesics of $X_D$, we can consider the subspace $\partial_D X$ of the visual boundary $\partial X$ consisting of all points represented by rays that also represent points in $\partial X_D$. The following theorem reveals the strong connection between these two spaces when $X$ is proper:

\begin{theorem}[Theorem 9.19 of \cite{petyt2023hyperbolic}]
\label{boundary}
If $X$ is proper CAT(0) space, then $\partial_DX$ is Isom($X$)-invariant, and its points are all visibility points in $\partial X$. Moreover, the identity map induces an Isom($X$)-equivariant homeomorphism $\partial_DX \rightarrow \partial X_D$.
\end{theorem}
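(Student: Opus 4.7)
The plan is to establish the three parts of the theorem -- Isom$(X)$-invariance, the homeomorphism with $\partial X_D$, and visibility -- by leveraging properness of $X$ together with the fact (Proposition \ref{Droughgeodesic}) that every CAT(0) geodesic is an unparametrized $q$-rough geodesic of $X_D$.

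Isom$(X)$-invariance of $\partial_D X$ and Isom$(X)$-equivariance of the identity-induced map $\Phi : \partial_D X \to \partial X_D$ are essentially automatic: every element of Isom$(X)$ sends curtains to curtains, hence lies in Isom$(X_D)$, so it sends CAT(0) rays to CAT(0) rays and preserves asymptotic classes in both $\partial X$ and $\partial X_D$ simultaneously. Well-definedness of $\Phi$ follows from $D \leq d$ (up to a bounded additive error): two $d$-asymptotic rays stay $D$-bounded-apart, hence share an $X_D$-Gromov class whenever either one does. Surjectivity of $\Phi$ is the first real step: for $p \in \partial X_D$, take $y_n \in X$ with $D(x_0, y_n) \to \infty$ representing $p$, and observe that the CAT(0) geodesics $[x_0, y_n]$ are uniform rough geodesics of $X_D$; properness of $X$ plus Arzel\`a--Ascoli gives a CAT(0) limit ray $\alpha$, and the rough-geodesic property of CAT(0) geodesics forces $[\alpha] = p$ in $\partial X_D$.

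The main obstacle is injectivity of $\Phi$. Given two CAT(0) rays $\alpha, \alpha'$ with the same $X_D$-Gromov class, $\delta$-hyperbolicity of $X_D$ yields $D(\alpha(t), \alpha'(t)) \leq C$ eventually, and I must promote this to $d$-asymptoticity. The plan is by contradiction: if $d(\alpha(t), \alpha'(t)) \to \infty$, then properness forces $\alpha, \alpha'$ to accumulate on distinct visual boundary points. Since both rays escape to $\partial X_D$, each crosses arbitrarily long $L$-chains of curtains, and combining such chains via the gluing lemma for $L$-chains would produce an arbitrarily long $L$-chain separating late portions of $\alpha$ from late portions of $\alpha'$, contradicting the $D$-bound. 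This is where properness is genuinely used.

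For visibility, given $\xi = [\alpha] \in \partial_D X$ and any $\eta = [\beta] \in \partial X$, pick $t_n, s_n \to \infty$ and consider $\gamma_n = [\alpha(t_n), \beta(s_n)]$. When $\xi \neq \eta$ the $\gamma_n$ stay within a bounded $d$-neighborhood of the basepoint by standard CAT(0) convexity, so properness and Arzel\`a--Ascoli give a bi-infinite limit geodesic $\gamma$. The negative endpoint of $\gamma$ represents $\eta$ by the usual visual-boundary argument; the positive endpoint of $\gamma$ and $\alpha$ both represent the same class in $\partial X_D$ by the rough-geodesic property, and the injectivity established above then identifies their $\partial X$-classes. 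Finally, continuity of $\Phi$ and $\Phi^{-1}$ reduces to comparing the cone topology on $\partial X$ with the Gromov product topology on $\partial X_D$, again bridged by the $q$-rough geodesic property.
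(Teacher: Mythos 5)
This statement is quoted verbatim from \cite{petyt2023hyperbolic} (their Theorem 9.19) and the paper gives no proof of it, so there is no in-paper argument to compare against; I can only assess your sketch on its own terms, and it has two genuine problems.

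The clearest error is in the visibility step. You assert that for $\xi\neq\eta$ the geodesics $\gamma_n=[\alpha(t_n),\beta(s_n)]$ ``stay within a bounded $d$-neighborhood of the basepoint by standard CAT(0) convexity.'' That is false in general: in $\R^2$ with $\alpha,\beta$ two perpendicular rays from the origin, $[\alpha(t_n),\beta(s_n)]$ escapes to infinity. Indeed, ``the $\gamma_n$ stay in a bounded region'' is essentially the definition of $\xi$ being a visibility point, which is exactly the nontrivial conclusion to be proved; general CAT(0) boundary points are not visibility points. The argument must use the hypothesis $\xi\in\partial_DX$ in an essential way --- concretely, that a ray representing a point of $\partial X_D$ crosses an infinite $L$-chain of curtains, and the separation properties of such a chain force every $\gamma_n$ to pass through (a bounded neighborhood of the pole of) some fixed curtain of the chain, after which Arzel\`a--Ascoli applies. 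As written, your visibility argument is circular.

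The second problem is the surjectivity of $\Phi$. Your limit ray $\alpha$ is extracted from the geodesics $[x_0,y_n]$ via convergence in the CAT(0) metric $d$, and you claim the rough-geodesic property ``forces $[\alpha]=p$.'' What the rough-geodesic property plus $\delta$-hyperbolicity actually gives is the implication: \emph{if} $D(x_0,\alpha(t))\to\infty$, then $(\alpha(t)\cdot p)_{x_0}\to\infty$ and hence $[\alpha]=p$. But $D$-unboundedness of the limit ray is not automatic. There is an inequality $D\leq d+1$ but no lower bound for $D$ in terms of $d$, and $X_D$ is not proper; a priori all of the $D$-growth along $[x_0,y_n]$ could occur at CAT(0) parameters tending to infinity with $n$, leaving the pointwise limit $\alpha$ bounded in $X_D$. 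Closing this gap is where the curtain machinery (e.g.\ dualizing long $L$-chains separating $x_0$ from $y_n$ onto $[x_0,y_n]$ and passing to a limiting infinite chain crossed by $\alpha$, using properness) is actually needed; your sketch of injectivity gestures at this machinery but your sketch of surjectivity omits it entirely. The invariance, equivariance, and well-definedness parts of your proposal are fine.
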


The key observation is the following: in the first half of the proof of Theorem \ref{structure}, the strong condition that $X$ is visibility wasn't fully used. Instead, to construct the $g$-invariant infinite flat strip $U\times \R$, we only need the visibility of a particular point, namely $\xi$ in Theorem \ref{sublinear-axis}, which can be implied by Theorem \ref{boundary} under suitable conditions. We first mark down the following proposition that plays an important role in the proof of Theorem \ref{structure}.

\begin{proposition}[Proposition 3.1 of \cite{Wu2017}]
\label{fix2points}
Let $X$ be a complete CAT(0) space and $g$ be an isometry with $\tau(g)>0$, then $g$ fixes at least two points on $\partial X$.
\end{proposition}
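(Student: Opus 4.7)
The plan is to produce two distinct $g$-fixed points in $\partial X$ by splitting into the semisimple and parabolic cases. If $g$ is semisimple with $\tau(g)>0$, then $g$ is hyperbolic, so by the classification of CAT(0) isometries $g$ preserves a geodesic line $\ell$ on which it acts by translation of amplitude $\tau(g)$; the two endpoints $\ell(\pm\infty)\in\partial X$ furnish the required pair of fixed boundary points. So the substantive work lies in the parabolic case, where no such natural axis is available.

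For the parabolic case, I would first invoke Theorem \ref{sublinear-axis} at a chosen basepoint $p\in X$ to obtain a sublinear axis $\alpha$ with $\alpha(0)=p$ and $\alpha(\infty)=\xi\in\partial X$ fixed by $g$. To produce a second fixed point, I would apply the same theorem to $g^{-1}$: this is legitimate because $d(x,g^{-1}x)=d(gx,x)$ forces $\textup{Min}(g^{-1})=\textup{Min}(g)=\emptyset$ and $\tau(g^{-1})=\tau(g)>0$, so $g^{-1}$ is again parabolic with positive translation length. This yields a second sublinear axis $\alpha'$ from $p$ to some $\xi'\in\partial X$ fixed by $g^{-1}$, and hence by $g$. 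The two candidate fixed points are therefore $\xi$ and $\xi'$.

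The main obstacle will be ruling out the degenerate case $\xi=\xi'$. My plan is an argument by contradiction: if $\xi=\xi'$, then $\alpha$ and $\alpha'$ are asymptotic geodesic rays emanating from the common basepoint $p$, and the standard CAT(0) fact that the convex function $t\mapsto d(\alpha(t),\alpha'(t))$ is bounded and vanishes at $t=0$ forces $\alpha=\alpha'$. Combining the two sublinearity estimates from Theorem \ref{sublinear-axis} then pins both $g^n p$ and $g^{-n} p$ to $\alpha(\tau(g) n)$, giving $d(g^n p, g^{-n} p) = o(n)$. On the other hand, applying the isometry $g^n$ gives $d(g^n p, g^{-n}p)=d(p, g^{2n} p)$, and subadditivity of $n\mapsto d(p,g^n p)$ combined with $\tau(g)=\inf_n d(p,g^n p)/n$ yields the lower bound $d(p, g^{2n}p)\geq 2n\,\tau(g)$. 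These two estimates are incompatible for large $n$ since $\tau(g)>0$, completing the contradiction and establishing $\xi\neq\xi'$.
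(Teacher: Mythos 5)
The paper does not prove this statement; it is imported verbatim as Proposition 3.1 of the cited work of Wu, whose argument your proposal essentially reconstructs. Your proof is correct: the semisimple case is handled by the axis of a hyperbolic isometry, and in the parabolic case applying Theorem \ref{sublinear-axis} to both $g$ and $g^{-1}$ (legitimate, since $\mathrm{Min}(g^{-1})=\mathrm{Min}(g)$ and $\tau(g^{-1})=\tau(g)$) produces the two candidate fixed points, while the distinctness argument --- uniqueness of the geodesic ray from $p$ to a given boundary point in a complete CAT(0) space, the resulting estimate $d(g^np,g^{-n}p)=o(n)$, against the subadditivity bound $d(p,g^{2n}p)\ge 2n\,\tau(g)$ --- is exactly the right mechanism and is watertight since $\tau(g)>0$.
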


We can now adapt the proof of Theorem \ref{structure} to prove the following: 

\begin{proposition}
\label{unbounded sublinear axis}
Let $X$ be a complete and proper CAT(0) space with base point $p$, and $g$ be a parabolic isometry with $\tau(g)>0$. Let $\alpha$ and $\xi=\alpha(\infty)$ be determined by Theorem  \ref{sublinear-axis}. If $\xi \in \partial_DX$, then $g$ is elliptic in $X_D$.
\end{proposition}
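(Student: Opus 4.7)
The plan is to construct a $g$-invariant closed convex subset of $X$ that splits isometrically as $U\times\R$ with both factors unbounded, and then invoke the first proposition of Section 2.2 to bound the $D$-diameter of such a product subspace. This immediately implies that $g$ has bounded orbit in $X_D$ and, since $X_D$ is hyperbolic, that $g$ is elliptic.

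To produce the strip, I would first exploit the hypothesis $\xi\in\partial_DX$. Since $X$ is proper, Theorem \ref{boundary} upgrades this to say that $\xi$ is a visibility point of $\partial X$. Next, by Proposition \ref{fix2points}, the isometry $g$ fixes at least one further boundary point $\eta\ne\xi$; visibility at $\xi$ then supplies a bi-infinite CAT(0) geodesic $\beta$ joining $\eta$ to $\xi$. The $\langle g\rangle$-orbit of $\beta$ is a family of bi-infinite geodesics all joining $\eta$ to $\xi$.

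I would now replay the first half of Wu's proof of Theorem \ref{structure}, with the observation (flagged in the paper just before the statement we are proving) that full visibility of $X$ is only invoked in its \emph{second} half. The construction of the flat strip needs only one bi-infinite geodesic joining two $g$-fixed boundary points. Concretely, pairwise applications of the CAT(0) flat-strip lemma to $\beta$ and $g^n\beta$ and closure under convex hull yield a closed convex subset $Z\subseteq X$ isometric to $U\times\R$, with the $\R$-factor in the direction of $\xi$. Moreover $g$ acts on $Z$ as
\[
g\cdot(x,t)=(g_1(x),\,t+\tau(g)),
\]
where $g_1$ is an isometry of $U$ of translation length $0$. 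Parabolicity of $g$ in $X$ forces $g_1$ to be fixed-point-free on $U$, and since $U$ is complete CAT(0), Bridson--Haefliger's fixed point theorem for bounded orbits forces $U$ to be unbounded.

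With $U\times\R\subseteq X$ closed and convex and both factors unbounded, the first proposition of Section 2.2 provides a constant $R$ with $D(y,y')\le R$ for all $y,y'\in U\times\R$. Since $g$ preserves $U\times\R$, fixing any $x\in U\times\R$ yields $D(x,g^nx)\le R$ for every $n$, so $g$ is elliptic in $X_D$. The main obstacle is the careful bookkeeping of Wu's argument to confirm that the flat-strip construction really only requires visibility at $\xi$: every bi-infinite geodesic used in building the parallel family and its convex hull has $\xi$ as an endpoint, so no visibility hypothesis at points other than $\xi$ is needed. Once this inspection is in place, everything else is a direct appeal to results already established earlier in the paper.
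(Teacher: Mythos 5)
Your proposal is correct and takes essentially the same route as the paper's proof: upgrade $\xi\in\partial_DX$ to visibility via Theorem \ref{boundary}, get a second $g$-fixed boundary point from Proposition \ref{fix2points}, form the $g$-invariant parallel set $U\times\R$ of a geodesic joining the two fixed points, rule out bounded $U$ using parabolicity of $g$, and conclude by the product-boundedness proposition that the orbit is bounded in $X_D$. (The paper invokes the parallel set $P_\alpha$ and the splitting theorem directly rather than a convex-hull-of-orbit construction, and it does not need your stronger assertion that $g_1$ has translation length zero --- only that $g_1$ is fixed-point-free, which is exactly the contradiction with semisimplicity you also use.)
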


\begin{proof}
By Theorem \ref{boundary} we know that $\xi$ is a visibility point of $\partial X$.
From Proposition \ref{fix2points}, we can find another point $\eta \neq \xi$ on $\partial X$ such that $\eta$ is a fixed point of $g$. Because $\xi$ is a visibility point, there is a geodesic $\alpha:\R \rightarrow X$ such that $\alpha(-\infty)=\xi$ and $\alpha(+\infty)=\eta$. Let $P_\alpha$ be the set of geodesic lines asymptotic to $\alpha$. By Theorem 2.14 of chapter II in \cite{Bridson1999MetricSO}, we know $P_\alpha$ is isometric to the product $U \times \R$ where $U$ is a closed convex subset in $X$.

Since $\alpha(-\infty),\alpha(+\infty)$ are both fixed by $g$, $g(\alpha(\R))$ is also geodesic line which is asymptotic to $\alpha(\R)$. In particular $P_\alpha=U\times \R$ is a $g$-invariant subset in $X$. By Proposition 5.3 of chapter I in \cite{Bridson1999MetricSO}, we know $g$ splits as $g=(g_1,g_2)$ where $g_1$ is an isometry of $U$ and $g_2$ is an isometry of $\R$.

Now, if $U$ is bounded, then Theorem 6.7 of chapter II in \cite{Bridson1999MetricSO} tells us that there exists $x_1\in X$ such that $g_1 x_1=x_1$. Since $g_2$ acts on $\R$ by isometry, it must be semisimple. Therefore $g=(g_1,g_2)$ is semisimple. Contradiction. 

Therefore $U$ is unbounded. Now $g$ acts on the product space $U\times \R$ with both $U$ and $\R$ unbounded. By applying the same argument as in Corollary \ref{complete+visibility} we see that indeed $g$ is elliptic in $X_D$.
\end{proof}

\begin{remark}
A direct consequence of the above proposition is the following implications: $\alpha$ is unbounded in $X_L$ for some $L$ $\implies$ $\alpha$ is unbounded in $X_D$  $\implies$ $g$ is elliptic in $X_D$ $\implies$ $g$ is elliptic in $X_L$ for every $L$.
\end{remark}

Combining Proposition \ref{bounded sublinear axis} and Proposition \ref{unbounded sublinear axis}, we can deduce the following theorem:

\begin{theorem}
Let $X$ be a complete, proper CAT(0) space and $g$ be a parabolic isometry with $\tau(g)>0$. Then $g$ is elliptic in $X_D$.
\end{theorem}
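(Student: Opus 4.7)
The plan is to reduce the claim to the two propositions already proved in the previous two subsections by performing a dichotomy on the sublinear axis. Starting with any base point $p$, Theorem \ref{sublinear-axis} furnishes a $g$-fixed boundary point $\xi \in \partial X$ together with a geodesic ray $\alpha$ with $\alpha(0)=p$ and $\alpha(\infty)=\xi$. The natural split is on whether $\alpha$ is bounded in the curtain model $X_D$.

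In the first case, $\alpha$ is bounded in $X_D$, and Proposition \ref{bounded sublinear axis} immediately gives that $g$ is elliptic in $X_D$.

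In the second case, $\alpha$ is unbounded in $X_D$; the goal is then to verify the hypothesis $\xi \in \partial_D X$ of Proposition \ref{unbounded sublinear axis}. Here I would combine two ingredients. First, Proposition \ref{Droughgeodesic} says that every CAT(0) geodesic is an unparametrized $q$-rough geodesic of $X_D$ for a universal constant $q$. Since $X_D$ is Gromov hyperbolic, an unbounded rough geodesic ray determines a well-defined point of the Gromov boundary $\partial X_D$, so after reparametrization $\alpha$ represents a point of $\partial X_D$. By the definition of $\partial_D X$ (and the identification in Theorem \ref{boundary}, which uses that $X$ is proper), this is exactly what it means for $\xi$ to lie in $\partial_D X$. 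Applying Proposition \ref{unbounded sublinear axis} finishes this case.

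The main obstacle here is essentially bookkeeping: one has to see that the boundedness/unboundedness of $\alpha$ in $X_D$ is indeed the right dichotomy, and that the rough-geodesic property of Proposition \ref{Droughgeodesic} upgrades unboundedness of $\alpha$ in $X_D$ to the genuine statement that $\xi \in \partial_D X$ (rather than merely saying $\alpha$ goes to infinity). Once this translation is in hand, no further work is required, as all the hard content — constructing the $L$-separated chains in the bounded case, and using visibility plus the parabolic product decomposition in the unbounded case — has already been absorbed into Propositions \ref{bounded sublinear axis} and \ref{unbounded sublinear axis}.
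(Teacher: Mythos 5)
Your proposal is correct and follows exactly the route the paper intends: a dichotomy on whether the sublinear axis $\alpha$ from Theorem \ref{sublinear-axis} is bounded in $X_D$, applying Proposition \ref{bounded sublinear axis} in the bounded case and, in the unbounded case, using Proposition \ref{Droughgeodesic} to see that $\alpha$ converges to a point of $\partial X_D$, so that $\xi\in\partial_D X$ and Proposition \ref{unbounded sublinear axis} applies. In fact you have made explicit the glue step (unboundedness of $\alpha$ in $X_D$ plus the rough-geodesic property gives $\xi\in\partial_D X$) that the paper leaves implicit when it says the theorem follows by ``combining'' the two propositions.
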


\begin{corollary}
    \label{exclude_contracting}
    Let $X$ be a complete, proper CAT(0) space and $g$ be a parabolic isometry with $\tau(g)>0$. Then $g$ cannot be contracting.
\end{corollary}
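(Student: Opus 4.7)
The plan is to deduce this corollary directly by chaining together two results that have already been established in the section: the theorem immediately preceding the corollary (which says that under the hypotheses, $g$ is elliptic in $X_D$) and the duality theorem (Theorem \ref{duality}) that characterizes contracting parabolic isometries with positive translation length in terms of loxodromic behavior in $X_D$.

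First, I would invoke the theorem just proved: since $X$ is complete and proper and $g$ is parabolic with $\tau(g)>0$, the isometry $g$ acts elliptically on the curtain model $X_D$. Elliptic in the hyperbolic-spaces sense means that one (equivalently every) orbit $\langle g\rangle x$ is bounded in $X_D$, so in particular the translation length $\tau_D(g)$ vanishes and $g$ cannot act loxodromically on $X_D$.

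Next, I would apply Theorem \ref{duality}, whose statement covers exactly the case at hand (parabolic $g$ with $\tau(g)>0$): it gives the equivalence
\[
g \text{ is contracting in } X \iff g \text{ acts loxodromically on } X_D.
\]
Since the previous paragraph rules out loxodromic action on $X_D$, the right-hand side fails and hence so does the left-hand side. Therefore $g$ is not contracting, as required.

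No serious obstacle is expected here because all the heavy lifting — namely establishing the duality and proving ellipticity in $X_D$ via the sublinear-axis dichotomy (Propositions \ref{bounded sublinear axis} and \ref{unbounded sublinear axis}) — has been carried out earlier in the section. The corollary is essentially a bookkeeping observation that repackages the dynamical conclusion (elliptic in $X_D$) as an intrinsic statement about $X$ (not contracting), which is the content advertised in the second bullet of the corollary in the introduction.
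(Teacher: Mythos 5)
Your proposal is correct and matches the paper's own argument exactly: the paper also deduces the corollary by noting that $g$ is elliptic in $X_D$ (by the theorem just proved) and then applying Theorem \ref{duality} to rule out $g$ being contracting. No differences worth noting.
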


\begin{proof}
We know $g$ is elliptic in $X_D$, and therefore $g$ is not contracting by Theorem \ref{duality}. 
\end{proof}

\begin{example} This is an example from \cite{Wu2017}.
Let $X$ be the upper half plane with Riemannian metric $ds^2=(dx^2+dy^2)+\frac{dx^2+dy^2}{y^2}$. Note this metric is the sum of Euclidean metric and hyperbolic metric, both of which are complete. Therefore $(X,ds^2)$ is indeed a complete metric space. $X$ is also clearly proper since the topology on $X$ is precisely the Euclidean topology. One can calculate the sectional curvature $K(x,y)=-\frac{1+3y^2}{(1+y^2)^3}<0$. Therefore $X$ is indeed a complete, proper CAT(0) space.

Now let $g:z\mapsto z+1$ be an isometry of $X$. One can easily see that $\tau(g)=1$ and it cannot be obtained. Hence by Corollary \ref{exclude_contracting}, $g$ is not contracting.
\end{example}

\subsection*{Declaration}
\textbf{Conflict of Interest} \quad All authors certify that they have no affiliations with or involvement in any organization or entity with any financial interest or non-financial interest in the subject matter or materials discussed in this manuscript.
\\
\textbf{Funding} \quad This research was partially supported by University of Oxford Summer Research Bursary for summer 2023.
\\
\textbf{Data Availability Statement} \quad Data sharing does not apply to this article as no datasets were generated or analyzed during the preparation of this article.

\printbibliography

\end{document}